\def\C{\mathbb{C}}
\def\c2{\mathbb{C}^2}
\def\R{\mathbb{R}}
\def\N{\mathbb{N}}
\def\P{\mathbb{P}}
\def\1{\bold{1}}
\def\a{\alpha}
\def\b{\beta}
\def\e{\varepsilon}
\def\f{\varphi}
\def\g{\gamma}
\def\p{\psi}
\def\cP{\mathcal P (X,\theta)}
 \def\cE{\mathcal E^1 (X,\theta)}
 \newcommand{\MA}{\mathrm{MA}}
\newcommand{\Amp}{\mathrm{Amp}\,}
\def\T{{\mathcal{T}}}
\def\cP{{PSH}}
\def\vol{\rm{vol}}
\def\ca{\rm{Cap}}
\newtheorem{lem}{Lemma}[section]
\newtheorem{prop}[lem]{Proposition}
\newtheorem{defi}[lem]{Definition}
\newtheorem{def/not}[lem]{Definition/Notations}
\newtheorem{thm}[lem]{Theorem}
\newtheorem{rem}[lem]{Remark}
\newtheorem{exa}[lem]{Example}
\begin{document}

\title[Stability in big cohomology classes]
{Stability of solutions to complex Monge-Amp\` ere equations in big cohomology classes}

\author{Vincent GUEDJ* and Ahmed ZERIAHI*}

 \date{\today \\ *both authors are partially supported by the ANR project MACK}
 
 \address{I.M.T., Universit{\'e} Paul Sabatier\\
31062 Toulouse cedex 09\\
France}
\email{vincent.guedj@math.univ-toulouse.fr}

\email{ahmed.zeriahi@math.univ-toulouse.fr}

\begin{abstract}
We establish various stability results for solutions of complex Monge-Amp\`ere equations in big cohomology classes, 
generalizing results that were known to hold  in the context of K\"ahler classes.
\end{abstract}

\maketitle

\section*{Introduction}

Let $(X,\omega)$ be a compact K\"ahler manifold of complex dimension $n \in \N^*$. Recall that a $(1,1)$-cohomology class  is {\it big} if it contains a {\it K\"ahler current},
i.e. a positive closed current  which dominates a K\"ahler form.  Fix $\a \in H^{1,1}(X,\R)$ a  big class and 
$\mu$ a non-negative Radon measure whose total mass $\mu(X)$ equals
$\vol(\a)$, the volume of $\a$.

The systematic study of complex Monge-Amp\`ere equations in big cohomology classes
has been initiated in \cite{BEGZ}. It has been show there that there exists a unique positive closed current $T_{\mu} \in \a$ with full Monge-Amp\`ere mass
such that 
$$
T_{\mu}^n=\mu
$$ 
if and only if $\mu$ does not charge pluripolar sets.

The purpose of this note is to study the stability properties of the solution $T_{\mu}$ to this complex Monge-Amp\`ere equation, i.e. to study the continuity properties of the mapping
$$
\mu \mapsto T_{\mu}.
$$

We can not expect this mapping to be continuous for the weakest topologies, i.e. when  the set of non pluripolar measures (resp. the set of positive currents
with full Monge-Amp\`ere masses)
 is endowed with the weak topology of Radon measures (resp. of positive currents), as the Monge-Amp\`ere operator $T \mapsto T^n$
 is not continuous either for this weak topology (this observation was made, in a local context, by Cegrell and Kolodziej in \cite{CK94}).
 On the other hand we have the following:
 
 \medskip
 \noindent {\bf PROPOSITION A.}
 {\it 
 Let $\mu_j,\mu$ be non pluripolar measures with total mass $\mu_j(X)=\mu(X)=\vol(\a)$. If $||\mu_j-\mu|| \rightarrow 0$ , then 
 $$
 T_{\mu_j} \rightarrow T_{\mu}
 \text{ in the weak sense of currents}.
 $$
 }
 \medskip
 
 Here $||\mu_j-\mu||$ denotes the total variation of the signed measure $\mu_j-\mu$.
 
 \smallskip
 
 It follows from the $dd^c$-lemma that any positive closed current $T \in \a$ decomposes as $T=\theta+dd^c \f$, for some 
 $\theta$-plurisubharmonic function $\f$. We let $\cP(X,\theta)$ denote the set of all such functions and observe that
 there is a unique $\f_{\mu} \in \cP(X,\theta)$ such that $\sup_X \f_{\mu}=0$ and $T_{\mu}=\theta+dd^c \f_{\mu}$. In the sequel 
 we let
 $$MA(\f):=\langle (\theta+dd^c \f)^n \rangle 
 $$
 denote the (non pluripolar) complex Monge-Amp\`ere measure of $\f \in PSH(X,\theta)$.
 The equation $T_{\mu}^n=\mu$ is thus equivalent to the Monge-Amp\`ere equation
 $$
 MA(\f_{\mu})=\mu.
 $$
 
 Since the weak convergence of currents $T_{\mu}$ is equivalent to the $L^1$-convergence of their normalized potentials
 $\f_{\mu}$, Proposition A can be reformulated as
 $$
\left(  || {\mu_j}-{\mu}|| \rightarrow 0 \right) \Longrightarrow \left( || \f_{\mu_j}-\f_{\mu}||_{L^1(X)} \rightarrow 0 \right).
$$

It is natural to try and estimate quantitatively how fast this convergence holds.
 Our second result yields a quantitative stability property "in energy":
 
 \medskip
 \noindent {\bf THEOREM B.}
 {\it
  There exists $C_n > 0$  such that if  $0 \geq \p, \f_1, \f_2,  \in \mathcal E^1 (X,\theta)$ are normalized by 
  $\sup_X \f_1=\sup_X \f_2$, then
 $$
\int_X \vert \f_1 - \f_2 \vert \MA (\p) \leq  C_n \cdot B^2  \cdot I (\f_1,\f_2)^{2^{-n}},
 $$
 where $B=\max \{1, \vert E (\f_1)\vert,\vert E (\f_2)\vert, \vert E (\p)\vert \}$. 
 }
 \medskip
 
 We refer the reader to the first section for the definition of the class ${\mathcal E}^1(X,\theta)$ of $\theta$-psh functions 
 $\f$ which have finite energy $E(\f)>-\infty$.  We recall here that the symmetric expression
 $$
 I(\f_1,\f_2):=\int (\f_1-\f_2) (MA(\f_2)-MA(\f_1)) \geq 0
 $$
 is used to define the important notion of "convergence in energy". Theorem B implies in particular a 
 quantitative estimate on how "convergence in energy" implies "convergence in capacity". 
 Related results were  previously obtained in \cite{BBGZ}, the latter article being a great source of inspiration for this note.
 
Let us also stress  that when the underlying cohomology class is K\"ahler, a weaker but quite elegant stability result
 was previously obtained by Blocki in \cite{Bl03}.  We briefly explain in section \ref{sec:energy} how our result can be used to derive more standard stability estimates in    this vein.
 
 \smallskip
 
 Our last result yields the strongest property of stability, assuming stronger properties on the corresponding measures.
 
 \medskip
 \noindent {\bf THEOREM C.}
 {\it  
 Assume $\mu=MA(\f_{\mu})=f_{\mu} \omega^n, \nu=MA(\f_{\nu})=f_{\nu} \omega^n$, 
where the densities $0 \leq f_{\mu},f_{\nu}$ are in   $L^p(\omega^n)$ for some $p>1$ and $\f_{\mu},\f_{\nu} \in \cP(X,\theta)$
are normalized by $\sup_X \f_{\mu}=\sup_X \f_{\nu}=0$. Then
 $$
  \Vert \f_\mu - \f_\nu \Vert_{L^{\infty} (X)} \leq M_{\tau} \Vert f_{\mu}- f_{\nu} \Vert_{L^1 (X)}^{\tau},
 $$
where  $M_{\tau} > 0$   only depends on upper bounds for the $L^p$ norms of $f_{\mu},f_{\nu}$ and
$$
\tau<\frac{1}{2^n(n+1)-1}.
$$
 }
 \medskip
 
 The existence of a unique normalized $\theta$-psh function $\f_{\mu}$ with minimal singularities such that $(\theta+dd^c \f_{\mu})^n=\mu$
 when $\mu$ has $L^p$-density, $p>1$, has been established in \cite[Theorem 4.1]{BEGZ}, generalizing Kolodziej's celebrated result \cite{Kol98}.
 
 It is likely that the exponent $\tau$ we obtain here is not sharp. When $\a$ is a K\"ahler class, a better exponent
 was obtained by Kolodziej in \cite{Kol03} and later on improved by Dinew-Zhang in \cite{DZ} (see also \cite{Hiep}
 for some other generalization).

\medskip

\noindent {\it Notations.}
In the whole article we fix 

$\bullet$ $(X,\omega)$ a compact K\"ahler manifold equipped with a K\"ahler form $\omega$,

$\bullet$ $\a \in H^{1,1}(X,\R)$ a big cohomology class,

$\bullet$ $\theta$  a smooth closed 
$(1,1)$-form representing $\a$.

\section{Preliminary results on big cohomology classes}

We briefly recall here some material  developed in full detail in \cite{BEGZ}.

\subsection{Quasi-psh functions}
Recall that an upper semi-continuous function $$\f:X\to[-\infty,+\infty[$$
is said to be \emph{$\theta$-psh} iff $\f$ is locally the sum of  a smooth and a psh function, and $\theta+dd^c\f\ge 0$ in the sense of currents, where $d^c$ is normalized so that 
$$
dd^c=\frac{i}{\pi}\partial\overline{\partial}.
$$

By the $dd^c$-lemma any closed positive $(1,1)$-current $T$ cohomologous to $\theta$ can conversely be written as $T=\theta+dd^c\f$ for some $\theta$-psh function $\f$ which is furthermore unique up to an additive constant. 

The set of all $\theta$-psh functions $\f$ on $X$ will be denoted by $\cP(X,\theta)$ and endowed with the weak topology, which coincides with the $L^1(X)$-topology. By Hartogs' lemma $\f\mapsto\sup_X\f$ is continuous in the weak topology. Since the set of closed positive currents in a fixed cohomology class is compact (in the weak topology), it follows that the set of $\f\in\cP(X,\theta)$ normalized by $\sup_X\f=0$ is compact. 

\smallskip

We introduce the extremal function $V_\theta$ defined by
\begin{equation}\label{equ:extrem}V_\theta(x):=\sup\{\f(x)|\f\in\cP(X,\theta),\sup_X\f\le 0\}.
\end{equation}
It is a $\theta$-psh function with \emph{minimal singularities} in the sense of Demailly, i.e.~we have 
$\f\le V_\theta+O(1)$ for any $\theta$-psh function $\f$. In fact it is straightforward to see that the following 'tautological maximum principle' holds:
\begin{equation}\label{equ:max}\sup_X\f=\sup_X(\f-V_\theta)
\end{equation}
for any $\f\in\cP(X,\theta)$.

\subsection{Ample locus and regularity of envelopes}

The cohomology class $\a=\{\theta\}\in H^{1,1}(X,\R)$ is said to be \emph{big} iff there exists a closed $(1,1)$-current 
$$
T_+=\theta+dd^c\f_+
$$ 
cohomologous to $\theta$ such that $T_+$ is \emph{strictly positive} (i.e. $T_+\ge \e_0 \omega$ for some $\e_0>0$). 
By Demailly's regularisation theorem~\cite{Dem92}  one can then furthermore assume that $T_+$ has \emph{analytic singularities}, that is there exists $c>0$ such that locally on $X$ we have 
$$
\f_+=c\log\sum_{j=1}^N|f_j|^2\text{ mod }C^\infty
$$
where $f_1,...,f_N$ are local holomorphic functions. Such a current $T$ is then smooth on a Zariski open subset $\Omega$, and 
the \emph{ample locus}  $\Amp(\a)$ of $\a$ is defined as the largest such Zariski open subset (which exists by the Noetherian property of closed analytic subsets). 

Note that \emph{any} $\theta$-psh function $\f$ with minimal singularities is locally bounded on the ample locus $\Amp(\a)$ since it has to satisfy $\f_+\le\f+O(1)$. 
Note that $\f_+$ does not have minimal singularities unless $\a$ is a K\"ahler class.

\smallskip

In case $\a=\{\theta\}\in H^{1,1}(X,\R)$ is a \emph{K{\"a}hler} class, plenty of \emph{smooth} $\theta$-psh functions are available. 
When $\a$ is both big and nef (i.e. $\a$ belongs to the closure of the cone of K\"ahler classes), a good regularity theory is available thanks to \cite{BEGZ}.
However for a general \emph{big} class the existence of even a \emph{single} $\theta$-psh function with minimal singularities that is also $C^\infty$ 
on the ample locus $\Amp(\a)$ is unknown.  

On the other hand  we have the following  regularity result of Berman-Demailly on the ample locus ~\cite{BD}: 

\begin{thm}\label{thm:c11} 
The function $V_\theta$ has locally bounded Laplacian on $\Amp(\theta)$. 

In particular the Monge-Amp{\`e}re measure $\MA(V_\theta)$ has $L^\infty$-density with respect to Lebesgue measure. More specifically we have $\theta\ge 0$ pointwise on $\{V_\theta=0\}$ and 
$$
\MA(V_\theta)={\bf 1}_{\{V_\theta=0\}}\theta^n.
$$
\end{thm}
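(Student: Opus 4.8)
The plan is to establish first the local regularity statement $\Delta V_\theta \in L^\infty_{\mathrm{loc}}(\Amp(\theta))$ (equivalently $V_\theta \in C^{1,1}_{\mathrm{loc}}(\Amp(\theta))$), and then to read off the structure of $\MA(V_\theta)$ from it. One half of the Laplacian bound is immediate: near a point $x_0 \in \Amp(\theta)$ pick a coordinate ball $B$ on which $\theta = dd^c h$ with $h$ smooth; since $V_\theta$ has minimal singularities it is locally bounded on $\Amp(\theta)$, and $V_\theta + h$ is psh on $B$, so $\Delta V_\theta \ge -\Delta h$ is locally bounded below. The whole content is the reverse inequality $\Delta V_\theta \le C$ locally, which I would phrase as the second-difference estimate
$$
V_\theta(x+\tau) + V_\theta(x-\tau) - 2\,V_\theta(x) \le C\,|\tau|^2 ,
$$
valid for $x$ near $x_0$ and $|\tau|$ small.

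For this estimate I would compare $V_\theta$ with its averaged translates. Shrinking $B = B(x_0,2r) \Subset \Amp(\theta)$, bigness of $\a$ provides, via the K\"ahler current $T_+ = \theta + dd^c\f_+$ with analytic singularities (smooth on $\Amp(\theta)$), a smooth strictly psh function $\sigma := \e_0^{-1}(h+\f_+)$ on $B$, with $dd^c\sigma \ge \omega$. For small $|\tau|$ the averaged translate $x \mapsto \tfrac12\bigl(V_\theta(x+\tau)+V_\theta(x-\tau)\bigr)$ is still $\le 0$, and by smoothness of $\theta$ it fails to be $\theta$-psh on $B(x_0,r)$ only up to the correction $C|\tau|^2\sigma$. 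Gluing a suitable maximum of $\tfrac12\bigl(V_\theta(\cdot+\tau)+V_\theta(\cdot-\tau)\bigr)+C|\tau|^2\sigma$ with $V_\theta$, so that the result coincides with $V_\theta$ off $B(x_0,r)$, would produce a global $\f \in \cP(X,\theta)$ with $\sup_X\f \le 0$, hence $\f \le V_\theta$ by the extremality \eqref{equ:extrem} of $V_\theta$; the second-difference bound near $x_0$ then falls out.

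The main obstacle is exactly this gluing, which is the technical core of the theorem (due to Berman-Demailly \cite{BD}): a (pluri)subharmonic function cannot be truncated without destroying positivity, so the patching above is only legitimate once one already knows that the averaged translate and $V_\theta$ differ by $O(|\tau|^2)$ on the annulus where the truncation occurs --- which is precisely what is being proved. The standard way out is a bootstrap on the modulus of continuity of $V_\theta$: one starts from the continuity of $V_\theta$ on $\Amp(\theta)$ (it is the envelope \eqref{equ:extrem} of the continuous obstacle $0$) and feeds any available modulus back through the comparison above, iterating until the $C^{1,1}$-estimate is reached. I expect this to be by far the hardest step, and I would simply follow \cite{BD} for it. Everything below is soft.

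Granting $\Delta V_\theta \in L^\infty_{\mathrm{loc}}(\Amp(\theta))$, I would finish in three steps. First, since $V_\theta$ has minimal singularities, $\MA(V_\theta)$ is the Bedford-Taylor measure $(\theta+dd^cV_\theta)^n$ of $V_\theta$ computed on $\Amp(\theta)$ (part of the material of \cite{BEGZ}), and by the Laplacian bound it is absolutely continuous with $L^\infty_{\mathrm{loc}}$ density there; in particular $\MA(V_\theta)$ has $L^\infty$-density with respect to Lebesgue measure. Second, $V_\theta$ is maximal on $\{V_\theta<0\}\cap\Amp(\theta)$, so $\MA(V_\theta)=0$ there: if $\MA(V_\theta)$ charged a small enough ball $B \Subset \{V_\theta<0\}\cap\Amp(\theta)$, the pluripotential balayage of $V_\theta$ relative to $B$ --- which stays $\le 0$ once $B$ is small, by upper semicontinuity of $V_\theta$ and the $C^2$-smoothness of $h$ --- would produce a $\theta$-psh function $\le 0$ lying strictly above $V_\theta$ somewhere, contradicting \eqref{equ:extrem}. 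Third, at any point $x_0$ with $V_\theta(x_0)=0$ one has $V_\theta\le 0=V_\theta(x_0)$; if $\theta(x_0)$ had a negative eigenvalue, say $\theta(x_0)(v,\overline v)<0$, then along the complex line $x_0+\C v$ the potential $-h$ would be strictly super-mean-valued at $x_0$ while $V_\theta+h$ is sub-mean-valued, forcing the small spherical averages of $V_\theta$ to exceed $V_\theta(x_0)=0$ --- impossible since $V_\theta\le 0$; hence $\theta\ge 0$ pointwise on $\{V_\theta=0\}$. Finally, elliptic regularity gives $V_\theta\in W^{2,p}_{\mathrm{loc}}(\Amp(\theta))$ for all finite $p$, so $V_\theta$ is twice differentiable a.e., and at a.e. point of $\{V_\theta=0\}$ it is twice differentiable with vanishing real Hessian (a function $\le 0$ that vanishes on a measurable set has zero Hessian at a.e. point of that set), whence $dd^cV_\theta=0$ and $(\theta+dd^cV_\theta)^n=\theta^n$ a.e. on $\{V_\theta=0\}$. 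Combining the second and third steps, $\MA(V_\theta)=\mathbf{1}_{\{V_\theta=0\}}\,\theta^n$.
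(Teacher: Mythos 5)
There is nothing in the paper to compare your argument against: Theorem \ref{thm:c11} is quoted from Berman--Demailly \cite{BD} and the paper offers no proof of it, only the remark that follows the statement. Judged on its own, your write-up does strictly more than the paper in one direction and the same thing in the other. The derivation of the two ``in particular'' assertions from the Laplacian bound is correct and is the standard one: $\MA(V_\theta)=0$ on $\{V_\theta<0\}\cap\Amp(\a)$ by balayage on a small ball plus the extremality \eqref{equ:extrem} (your observation that the balayage stays $\le 0$ once the oscillation of the local potential $h$ is smaller than the gap $-\sup_{\overline B}V_\theta$ is the right way to legitimize the competitor); $\theta\ge 0$ on the contact set by comparing sub-mean-value inequalities for $V_\theta+h$ and for $h$ along a complex line; and $dd^cV_\theta=0$ a.e.\ on $\{V_\theta=0\}$ because an a.e.\ twice differentiable function vanishing on a measurable set has vanishing second-order Taylor polynomial at a.e.\ density point of that set, so that the absolutely continuous measure $\MA(V_\theta)$ equals $\theta^n$ there. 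For the core estimate $\Delta V_\theta\in L^\infty_{\mathrm{loc}}(\Amp(\a))$ you do exactly what the paper does, namely cite \cite{BD}, so there is no logical gap.

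Two remarks on the part you defer. First, your sketched mechanism (averaged translates corrected by $C|\tau|^2\sigma$, then a spatial max-gluing with $V_\theta$) is the classical argument for envelopes over domains of $\C^n$; on a general compact $X$ there is no global translation to act by, and, as you note yourself, the truncation step is circular. The actual proof in \cite{BD} globalizes the same idea differently: the ``translates'' are defined via the exponential map of the K\"ahler metric, and the competitor is made genuinely $\theta$-psh not by spatial truncation but by a Kiselman--Legendre transform in the radius variable, which is what removes the circularity. Since you explicitly fall back on the citation, this is a presentational inaccuracy rather than an error. Second, a minor ordering point: the global $L^\infty$ bound on the density of $\MA(V_\theta)$ does not follow from the local Laplacian bound alone (which is only locally uniform on $\Amp(\a)$); it comes out of the final identity $\MA(V_\theta)={\bf 1}_{\{V_\theta=0\}}\theta^n$ together with the smoothness of $\theta$ on the compact $X$ and the fact that the non-pluripolar product puts no mass on the analytic set $X\setminus\Amp(\a)$ --- which is indeed how your argument concludes.
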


Since $V_\theta$ is quasi-psh this result is equivalent to the fact that the curent $\theta+dd^c V_\theta$
has $L^\infty_{loc}$ coefficients on $\Amp(\a)$ and shows in particular by Schauder's elliptic estimates that $V_\theta$ is in fact $C^{2-\e}$ on $\Amp(\a)$ for each $\e>0$.

\subsection{Full Monge-Amp\`ere mass}

In~\cite{BEGZ} the \emph{non-pluripolar product} 
$$
(T_1,...,T_p)\mapsto\langle T_1\wedge...\wedge T_p\rangle
$$ 
of closed positive $(1,1)$-currents is shown to be well-defined as a closed positive $(p,p)$-current putting no mass on pluripolar sets. 
In particular given $\f_1,...,\f_n\in\cP(X,\theta)$ we define their mixed Monge-Amp{\`e}re measure as 
$$\MA(\f_1,...,\f_n)=\langle(\theta+dd^c\f_1)\wedge...\wedge(\theta+dd^c\f_n)\rangle.$$
It is a non-pluripolar positive measure whose total mass satisfies 
$$
\int_X\MA(\f_1,...,\f_n)\le \vol(\a)
$$
where the right-hand side denotes the \emph{volume} of the cohomology class $\a$. 
If $\f_1,...,\f_n$ have minimal singularities then they are locally bounded on $\Amp(\a)$, and the product 
$$
(\theta+dd^c\f_1)\wedge...\wedge(\theta+dd^c\f_n)
$$ 
is thus well-defined by Bedford-Taylor~\cite{BT82}. Its trivial extension to $X$ coincides with $\MA(\f_1,...,\f_n)$, and we have 
$$
\int_X\MA(\f_1,...,\f_n)=\vol(\a).
$$
In case $\f_1=...=\f_n=\f$, we simply set
$$
\MA(\f)=\MA(\f,...,\f)
$$
and say that $\f$ has \emph{full Monge-Amp{\`e}re mass} iff $\int_X\MA(\f)=\vol(\a)$. We let
$$
{\mathcal E}(X,\theta):=\left\{ \f \in PSH(X,\theta) \, | \, \int_X\MA(\f)=\vol(\a) \right\}
$$
denote the set of $\theta$-psh functions with full Monge-Amp\`ere mass.
We thus see that $\theta$-psh functions with minimal singularities have full Monge-Amp{\`e}re mass, but the converse is not true. 

A crucial point is that the non-pluripolar Monge-Amp{\`e}re operator is continuous along monotonic sequences of functions with full Monge-Amp{\`e}re mass. 
In fact we have (cf.~\cite{BEGZ} Theorem 2.17):

\begin{prop}\label{prop:cont} 
The operator
$$
(\f_1,...,\f_n)\mapsto\MA(\f_1,...,\f_n)
$$
is continuous along monotonic sequences of functions with full Monge-Amp{\`e}re mass. 
If $\int_X(\f-V_\theta)\MA(\f)$ is finite, then 
$$
\lim_{j\to\infty}(\f_j-V_\theta)\MA(\f_j)=(\f-V_\theta)\MA(\f)
$$
for any monotonic sequence $\f_j\to\f$.
\end{prop}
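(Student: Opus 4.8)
This is \cite{BEGZ}, Theorem 2.17; let me indicate how I would prove it. The plan is to first reduce the first assertion to the diagonal case and then split the monotone sequence into the increasing and the decreasing case. For the reduction I would use that the non-pluripolar product is symmetric and multilinear and that $\mathcal E(X,\theta)$ is stable under convex combinations, so that a polarization identity expresses $\MA(\f_1,\dots,\f_n)$ as a combination of the diagonal measures $\MA(t_1\f_1+\dots+t_n\f_n)$ with $t_i\ge 0$, $\sum_i t_i=1$; it then suffices to show that $\MA(\f_j)\to\MA(\f)$ weakly for a monotone sequence $\f_j\to\f$ of functions of full Monge--Amp\`ere mass. (Alternatively one varies a single potential at a time and invokes the mixed Monge--Amp\`ere inequalities of \cite{BEGZ} in place of convexity.) Subtracting constants, I may also assume all functions are $\le V_\theta$.

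The second step is to pass to the canonical bounded truncations $\f_j^{(k)}:=\max(\f_j,V_\theta-k)$ and $\f^{(k)}:=\max(\f,V_\theta-k)$, which have minimal singularities and hence are locally bounded on the ample locus $\Omega:=\Amp(\a)$, where moreover $V_\theta$ is continuous by Theorem~\ref{thm:c11}. For fixed $k$ one has $\f_j^{(k)}\to\f^{(k)}$ monotonically, so by the monotone continuity of the Bedford--Taylor operator on locally bounded potentials \cite{BT82} the measures $\MA(\f_j^{(k)})$ converge weakly to $\MA(\f^{(k)})$ on $\Omega$; since all these measures have total mass $\vol(\a)$ and put no mass outside $\Omega$, a cut-off argument promotes this to weak convergence on all of $X$.

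The third step, which I expect to be the main obstacle, is to control the difference between $\MA(\f_j)$ and its truncations \emph{uniformly in $j$}. By construction $\MA(\f_j)$ is the increasing limit of the measures ${\bf 1}_{\{\f_j>V_\theta-k\}}\MA(\f_j^{(k)})$, and the full-mass hypothesis says exactly that the defect $\tau_j(k):=\vol(\a)-\int_X{\bf 1}_{\{\f_j>V_\theta-k\}}\MA(\f_j^{(k)})$ tends to $0$ as $k\to\infty$ for each fixed $j$ (and for $j=\infty$). What is needed is a bound on $\tau_j(k)$ independent of $j$. Here I would use that on the open set $\{\f_j<V_\theta-k\}$ one has $\f_j^{(k)}\equiv V_\theta-k$, so by plurifine locality $\MA(\f_j^{(k)})$ coincides there with $\MA(V_\theta)={\bf 1}_{\{V_\theta=0\}}\theta^n$, a measure with $L^\infty$ density by Theorem~\ref{thm:c11}; in the decreasing case one then has the crucial inclusion $\{\f_j\le V_\theta-k\}\subseteq\{\f\le V_\theta-k\}$ (respectively $\subseteq\{\f_1\le V_\theta-k\}$ in the increasing case), which together with the monotonicity of the truncated Monge--Amp\`ere masses lets one dominate $\tau_j(k)$ by a $j$-independent quantity built out of $\MA(V_\theta)$-type masses on the small set $\{\f\le V_\theta-k\}$; this vanishes as $k\to\infty$ precisely because such measures are absolutely continuous. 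With this uniform estimate, letting first $j\to\infty$ (step two) and then $k\to\infty$ gives $\MA(\f_j)\to\MA(\f)$ weakly.

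Finally, for the weighted statement I would run the same scheme with the extra weight $\f_j-V_\theta$ truncated as $\max(\f_j-V_\theta,-m)$: for fixed $k$ and $m$ Bedford--Taylor theory gives the weak convergence of $\max(\f_j-V_\theta,-m)\MA(\f_j^{(k)})$, while the hypothesis $\int_X(\f-V_\theta)\MA(\f)>-\infty$ is exactly what forces the tails $\int_{\{\f_j-V_\theta<-m\}}(\f_j-V_\theta)\MA(\f_j)$ to be uniformly small (automatically so along decreasing sequences, by hypothesis along increasing ones). Letting $j\to\infty$, then $m\to\infty$ and $k\to\infty$, yields $(\f_j-V_\theta)\MA(\f_j)\to(\f-V_\theta)\MA(\f)$. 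Apart from the uniform tail estimate of the third step, everything is Bedford--Taylor theory on the ample locus combined with the regularity of $V_\theta$ and bookkeeping with the plurifine topology.
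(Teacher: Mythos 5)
The paper does not actually prove this proposition: it is quoted verbatim from \cite{BEGZ} (Theorem 2.17), so there is no internal argument to compare yours against. That said, your overall architecture --- truncating at $V_\theta-k$, applying Bedford--Taylor monotone continuity to the minimal-singularity truncations $\f_j^{(k)}:=\max(\f_j,V_\theta-k)$ on $\Amp(\a)$, and then controlling the mass defect $\tau_j(k)=\vol(\a)-\int_{\{\f_j>V_\theta-k\}}\MA(\f_j^{(k)})$ uniformly in $j$ --- is indeed the standard route, and your first, second and fourth steps are fine modulo routine details.

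The gap is in your third step, which is the heart of the matter. Plurifine locality identifies $\MA(\f_j^{(k)})$ with $\MA(V_\theta)$ only on the plurifine \emph{open} set $\{\f_j<V_\theta-k\}$; it says nothing about the contact set $\{\f_j=V_\theta-k\}$, which is exactly where the defect can concentrate. More decisively, the bound you claim --- $\tau_j(k)$ dominated, uniformly in $j$, by an $\MA(V_\theta)$-type mass of $\{\f\le V_\theta-k\}$, which tends to $0$ because $\MA(V_\theta)$ is non-pluripolar --- makes no use of the full-mass hypothesis on the limit $\f$, and would therefore show that \emph{every} decreasing limit of minimal-singularity potentials has full Monge--Amp\`ere mass. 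This is false: take $\f\in\cP(X,\theta)$ without full mass and set $\f_j:=\max(\f,V_\theta-j)\downarrow\f$; each $\f_j$ has minimal singularities, yet for $j\ge k$ one has $\f_j^{(k)}=\max(\f,V_\theta-k)$ and $\{\f_j>V_\theta-k\}=\{\f>V_\theta-k\}$, so that $\tau_j(k)=\vol(\a)-\int_{\{\f>V_\theta-k\}}\MA(\max(\f,V_\theta-k))\longrightarrow\vol(\a)-\int_X\MA(\f)>0$ as $k\to\infty$, while $\MA(V_\theta)\left(\{\f\le V_\theta-k\}\right)\to 0$. The correct uniform tail estimate must therefore bring in the full-mass hypothesis on the limit (resp.\ on $\f_1$ in the increasing case) through the comparison/monotonicity properties of the truncated non-pluripolar masses established in \cite{BEGZ}: for $\f_j\ge\f$ one gets $\tau_j(k)\le\tau_\infty(k)$, and $\tau_\infty(k)\to 0$ precisely because $\f\in{\mathcal E}(X,\theta)$. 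The same issue recurs in your sketch of the weighted statement, where the uniform smallness of $\int_{\{\f_j<V_\theta-m\}}(V_\theta-\f_j)\MA(\f_j)$ again requires a comparison with the limiting function rather than being ``automatic'' along decreasing sequences.
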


\subsection{Weighted energies}
 
Let $\p \in PSH(X,\theta)$ be a $\theta$-psh function with minimal singularities. Its  \emph{Aubin-Mabuchi energy} is
$$
E(\p):=\frac{1}{n+1}\sum_{j=0}^n\int_X(\p-V_\theta) \langle (\theta+dd^c \p)^j \wedge (\theta+dd^c V_{\theta})^{n-j} \rangle.
$$
One can check \cite{BEGZ} that its G{\^a}teaux derivatives are given by
$$
E'(\p)\cdot v=\int_X v \MA(\p)
$$
showing in particular that $E$ is non-decreasing. 

\begin{defi}
We let  ${\mathcal E}^1(X,\theta)$ denote  the class of all $\theta$-plurisubharmonic functions $\f$ such that
$$
E(\f):=\inf_{\p \geq \f} E(\p)>-\infty
$$
where the infimum is taken over all functions $\p$ with minimal singularities.
\end{defi}

Alternatively a function $\f$ belongs to ${\mathcal E}^1(X,\theta)$ if and only if it belongs to ${\mathcal E}(X,\theta)$ and
$\f \in MA(\f)$. 

More generally, given $\chi= \R \rightarrow \R$ a convex increasing function such that $\chi(-\infty)=-\infty$,
one considers, for $\p$ with minimal singularities,
$$
E_{\chi}(\p):=\frac{1}{n+1}\sum_{j=0}^n\int_X \chi(\p-V_\theta) \langle (\theta+dd^c \p)^j \wedge (\theta+dd^c V_{\theta})^{n-j} \rangle.
$$
This weighted energy is again non-decreasing \cite[Proposition 2.8]{BEGZ}, hence the following:

\begin{defi}
We let  ${\mathcal E}_{\chi}(X,\theta)$ denote  the class of all $\theta$-plurisubharmonic functions $\f$ such that
$$
E_{\chi}(\f):=\inf_{\p \geq \f} E_{\chi}(\p)>-\infty
$$
where the infimum is taken over all functions $\p$ with minimal singularities.
\end{defi}

One can easily check that these classes exhaust the class of functions with full Monge-Amp\`ere mass \cite[Proposition 2.11]{BEGZ},
$$
 {\mathcal E}(X,\theta) =\bigcup_{\chi} {\mathcal E}_{\chi}(X,\theta).
 $$

We finally introduce the \emph{symmetric} expression
$$
I(\f,\p):=\int_X(\f-\p)(\MA(\p)-\MA(\f)) \geq 0
$$
where the non-negativity can be deduced from the following formula
\begin{equation}
\label{equ:I}
I(\f,\p)=\sum_{j=0}^{n-1}\int_\Omega d(\f-\p)\wedge d^c(\f-\p)\wedge \langle (\theta+dd^c\f)^j\wedge(\theta+dd^c\psi)^{n-1-j} \rangle.
\end{equation}
 
\begin{defi}
 A sequence of functions $\f_j \in \cE$  \emph{converges in energy} towards $\f \in\cE$ if $I(\f_j,\f)\to 0$ as $j\to\infty$. 
\end{defi}

 This notion is introduced in \cite{BBGZ} where it is shown that convergence in energy  implies continuity of the complex Monge-Amp\`ere operator.

\subsection{Monge-Amp{\`e}re capacity}

  As in~\cite{GZ05, BEGZ} we define the \emph{Monge-Amp{\`e}re (pre)capacity} in our setting as the upper envelope of all measures 
  $\MA(\f)$ with $\f\in\cP(X,\theta)$, $V_\theta-1\le\f\le V_\theta$, i.e.
\begin{equation}\label{equ:cap}
\ca(B):=\sup \left\{\int_B\MA(\f),\,\f\in\cP(X,\theta),\,V_\theta-1\le\f\le V_\theta\text{ on }X \right\}.
\end{equation}
for every Borel subset $B$ of $X$.

By definition, a positive measure $\mu$ is absolutely continuous with respect the capacity $\ca$ iff $\ca(B)=0$ implies $\mu(B)=0$. This means exactly that $\mu$ is non-pluripolar in the sense that $\mu$ puts no mass on pluripolar sets. Since $\mu$ is subadditive, it is in turn equivalent to the existence of a non-decreasing right-continuous function $F:\R_+\to\R_+$ such that 
$$\mu(B)\le F(\ca(B))$$
for all Borel sets $B$. Roughly speaking the speed at which $F(t)\to 0$ as $t\to 0$ measures "how non-pluripolar" $\mu$ is. 

 \begin{defi}
Fix $\b>0$.
 We say that $\mu$ satisfies the condition ${\mathcal H}(\b)$ if there exists $C_{\b}>0$ such that
 for all Borel sets $B \subset X$,
 $$
 \mu(B) \leq C_{\b} \ca(B)^{\b+1}.
 $$
 If this holds for all $\b>0$, we say that $\mu$ satisfies the condition ${\mathcal H}(\infty)$.
 \end{defi}
 
 Such conditions were introduced by Kolodziej in \cite{Kol98} who showed that measures $\mu=MA(\f)$ satisfying the condition
 ${\mathcal H}(\b)$ are such that $\f$ is continuous if the cohomology class $\a$ is K\"ahler. He further observed that
 if $\mu=f \omega^n$ has density in $L^p$ for some $p>1$, then $\mu$ satsifies condition ${\mathcal H}(\infty)$.
 
 These results were later on extended
 to the case of big cohomology classes in \cite{EGZ,BEGZ,EGZ11}.

 \smallskip
 
 Recall that the complex Monge-Amp\`ere operator $\f \mapsto MA(\f)$ is discontinuous for the $L^1$-topology.
 One needs to require a stronger notion of convergence of potentials:
 
  \begin{defi}
 A sequence $(\f_j)$ of $\theta$-plurisubharmonic functions converges in capacity towards $\f$ if for all $\e>0$,
 $$
 \ca\left(\{�|\f_j-\f| >\e \}\right) \rightarrow 0
 \text{ as } j \rightarrow +\infty.
 $$
 \end{defi}
 
 If a  sequence $\f_j \in {\mathcal E}^1(X,\theta)$ converges to $\f \in {\mathcal E}^1(X,\theta)$ in capacity, then
 $MA(\f_j)$ weakly converges towards $MA(\f)$ \cite{GZ07,DH}. This generalizes previous continuity statements, as monotonic
 convergence implies convergence in capacity.

\section{Weak stability properties}

In this section we establish the weakest stability property, i.e. Proposition A stated in the introduction.

 \subsection{Unstability}
 
 We start by observing that one can not expect stability in general. Recall \cite{BEGZ} that if $\mu$ is  a non-negative Radon measure which vanishes
 on pluripolar sets and whose total mass equals $\vol(\a)$,  then there exists a unique positive closed current $T_{\mu} \in \a$
 with full Monge-Amp\`ere mass and such that 
 $$
 \langle T_{\mu}^n \rangle=\mu.
 $$
 The current $T_{\mu}$ decomposes as $T_{\mu}=\theta+dd^c \f_{\mu}$, where $\f_{\mu} \in \cP(X,\theta)$ is uniquely determined, once normalized
 by $\sup_X \f_{\mu}=0$.
 
 One can not expect the operator $\mu \mapsto \f_{\mu}$ (or equivalently $\mu \mapsto T_{\mu}$) to be continuous, as its inverse
 operator $\f \mapsto \langle (\theta+dd^c \f)^n \rangle$ is not either. Here is a variation on a classical local example \cite{Ceg83} of such discontinuous behavior:
 
 \begin{exa}
 The functions
 $$
 \p_j(z_1,z_2):=\frac{1}{2j} \log \left[ |z_1^j+z_2^j|^2+1 \right]
 $$
 are smooth and plurisubharmonic in $\C^2$. They form a locally bounded sequence which converges in $L_{loc}^1(\C^2)$ towards
 $$
 \p(z_1,z_2)=\log \max[1, |z_1|, |z_2| ].
 $$
 Observe that the Monge-Amp\`ere measures $(dd^c \p_j)^2$ vanish identically, while $(dd^c \p)^2$ is the Lebesgue measure on the
 real torus $\{ |z_1|=|z_2|=1 \}$.

 One can globalize this example, working on $X=\C\P^2$ equipped with its Fubini-Study K\"ahler form $\theta=\omega_{FS}$. Set
 $$
 \f_j[z]=\frac{1}{2j} \log \left[ |z_1^j+z_2^j|^2+|z_0|^{2j} \right] -\log ||z||,
 $$
 where $[z]=[z_0:z_1:z_2]$ denotes the homogeneous coordinates in $\C\P^2$ and $(z_0=0)$ denotes the hyperplane at infinity,
 $\C\P^2=\C^2 \cup (z_0=0)$.
 
 The functions $\f_j$ are $\theta$-psh and smooth in $\C\P^2 \setminus S_j$, where 
 $S_j$ denotes the finite set of points at infinity $\{z_0=0=z_1^j+z_2^j\}$.
 The $\f_j$'s converge in $L^1(\C\P^2)$ towards
 $$
  \f(z_1,z_2)=\log \max[|z_0|, |z_1|, |z_2| ] -\log ||z||,
  $$
  whose Monge-Amp\`ere measure is again the Lebesgue measure on the torus. 
 \end{exa}
 
 This example is not so satisfactory since the Monge-Amp\`ere measures $MA(\f_j)$ are all supported on the
 (pluripolar) hyperplane at infinity. We thus propose a slightly more elaborate construction where the approximants
 are uniformly bounded:

  \begin{exa}
 Using the same notations as in previous example, we set
 $$
 \Phi_j:=\log \left[ e^{\f_j}+e^{-K} \right],
 $$
 where $K>0$ is a large constant. The reader will easily check that  
 $$
 \theta+dd^c \Phi_j=\frac{e^{\f_j} \theta_{\f_j}+e^{-K} \theta}{e^{\f_j} +e^{-K}}+\frac{e^{\f_j-K} d \f_j \wedge d^c \f_j}{\left[ e^{\f_j}+e^{-K}\right]^2} \geq 0,
 $$
 so that $\Phi_j$ are uniformly bounded $\theta$-psh functions on $\C\P^2$. We use here the shortcuts $\theta=\omega_{FS}$
 and  $\theta_u:=\theta+dd^c u$.
 
 A similar computation can be made for $\Phi:=\log \left[ e^{\f}+e^{-K} \right]$, showing in particular that
$$
MA(\Phi) \geq \frac{e^{ -2 \sqrt{3}}}{\left[ e^{-\sqrt{3}}+e^{-K} \right]^2} \sigma_{\T}
$$
dominates a multiple of the (normalized) Lebesgue measure $\sigma_{\T}$ on the real torus $\T=\{|z_0|=|z_1|=|z_2|\}$.

This multiple can be made arbitrarily close to $1$ by choosing $K$ large enough. On the other hand $MA(\Phi_j)$ can be computed explicitly
by using that $(dd^c \p_j)^2, dd^c \p_j \wedge d \p_j, dd^c \p_j \wedge d^c \p_j$ are all zero in $\C^2$. One can this way verify that
any cluster point of $MA(\Phi_j)$ is different from $MA(\Phi)$, although $\Phi_j$ converges towards $\Phi$.
 \end{exa}

 \subsection{Proof of Proposition A}
  
 We now prove a qualitative property of stability under a weak domination assumption.
 Let $\mu_j,\mu$ be non negative Radon measures on $X$ which do not charge pluripolar sets
 and whose total mass equals $\vol(\a)$. 
 
 \medskip
 \noindent {\bf PROPOSITION A'.}
 {\it 
 If the measures $\mu_j=f_j \nu$ are all absolutely continuous with respect to a  fixed non pluripolar measure $\nu$ and 
 $f_j \rightarrow f$ in $L^1(\nu)$, then 
 $$
 T_{\mu_j} \rightarrow T_{\mu}
 \text{ in the weak sense of currents},
 $$
 where $\mu=f \nu$.
 }
 \medskip
 
 This result can be seen as a generalization of a local result of Cegrell-Kolodziej \cite{CK06} who
 asked for $f_j$ to be uniformly bounded. 
 
 \medskip
 
 \noindent {\it Proof.} We let $\f_j,\f$ denote the normalized Monge-Amp\`ere potentials,
 $$
 \mu_j=(\theta+dd^c \f_j)^n, \,
 \mu=(\theta+dd^c \f)^n,
 \text{ with } \sup_X \f_j=\sup_X \f=0.
 $$
 We assume  that 
 $
 \mu_j=f_j \nu, \, \mu=f \nu,
 $
 where $\nu$ vanishes on pluripolar sets and $f_j \rightarrow f$ in $L^1(\nu)$, and we are going to show that in this case
 $(\f_j)$ converges in $L^1(X)$ towards $\f$.
 
 By weak compactness, we can assume -up to extracting- that $\f_j \rightarrow \p \in \cP(X,\theta)$, with $\sup_X \p=0$.
 Extracting again, we can also assume that there exists $g \in L^1(\nu)$ such that
 $$
 f_j \leq g \text{ for all } j \in \N.
 $$
 Since the measure $g \nu$ does not charge pluripolar sets, it follows from \cite[Proposition 3.2]{BEGZ} 
 that there exist $\chi:\R \rightarrow \R$ a convex increasing weight and $C>0$ such that $\chi(-\infty)=-\infty$ and 
 for all $j \in \N$,
 $$
 \int (-\chi) (\f_j-V_{\theta}) g d\nu \leq C.
 $$
 This shows that 
 $$
\int (-\chi) (\f_j-V_{\theta}) MA(\f_j) \leq C,
$$
hence \cite[Proposition 2.10]{BEGZ} insures that $\p \in {\mathcal E}_{\chi}(X,\theta)$.

The functions $\p_j:=(\sup_{l \geq j} \f_l)^* \in \cP(X,\theta)$ decrease to $\p$ and satisfy
$$
MA(\p_j) \geq (\inf_{l \geq j} f_l) \nu.
$$
We infer $MA(\p) \geq \mu=f \nu$, whence equality since these measures have the same mass $\vol(\a)$.

This shows that $MA(\p)=MA(\f)$, hence these normalized potentials have to be equal,  by the uniqueness in \cite[Theorem 3.1]{BEGZ}.
$\Box$
 
 \medskip
 
 We finally observe that Proposition A and Proposition A' are equivalent. Indeed if $\mu_j=f_j\nu$ and $\mu=f \nu$, then
by definition
$$
||\mu_j-\mu||=||f_j-f||_{L^1(\nu)}
$$
so that Proposition A' is a particular case of Proposition A. 

Conversely, if $\mu_j,\mu$ are non pluripolar measures of the same mass $\vol(\a)$ such that
$||\mu_j-\mu|| \rightarrow 0$, then
$$
\nu:=\mu+\sum_{j \geq 0} 2^{-j} \mu_j
$$
is  a well defined non pluripolar Radon measure with respect to which $\mu_j,\mu$ are absolutely continuous, thus the
hypotheses of Proposition A' are satisfied.

 \section{Stability in energy} \label{sec:energy}
 
 \subsection{Case of a K\"ahler class}
 
  Our starting point is the following result which is  a refinement of \cite[Lemma 3.12]{BBGZ}:
  
  \begin{lem} \label{lem:BBGZ}
  There exists $\kappa_n > 0$  such that if
  $0 \geq \f_1, \f_2, \p_1, \p_2 \in \cE$ satisfy $E (\f_i) \geq - B$, $E (\p_i) \geq - B$, then
  \begin{equation}
  \label{eq:P0}
  \left\vert \int (\f_1 - \f_2) (\MA (\p_1) - \MA (\p_2)) \right\vert \leq  \kappa_n B_+^{2} I (\f_1,\f_2)^{ 2^{-n}} I (\p_1,\p_2)^{2^{ - n}}
  \end{equation}
  and
 \begin{equation}
  \label{eq:P1}
  \int d (\f_1 - \f_2) \wedge d^c (\f_1 - \f_2) \wedge T_{n - 1}  \leq \kappa_n B_+^{2}  I (\f_1,\f_2)^{2^{-(n-1)}},
 \end{equation}
  where $B_+ :=\max(1,B)$ and
  $$
 T_{n-1}:= \sum_{j = 0}^{n - 1} (\theta+dd^c\p_1)^j\wedge(\theta+dd^c\p_2)^{n-1-j}.
  $$
  \end{lem}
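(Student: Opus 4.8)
The plan is to derive everything from \eqref{eq:P1}, which carries the real content, and to obtain \eqref{eq:P0} from it by one integration by parts followed by a Cauchy--Schwarz inequality. Throughout I write $\omega_u:=\theta+dd^c u$, $f:=\f_1-\f_2$, $g:=\p_1-\p_2$ and $T_{n-1}:=\sum_{j=0}^{n-1}\omega_{\p_1}^j\wedge\omega_{\p_2}^{n-1-j}$; since we are in the K\"ahler case here, $V_\theta=0$ and $T_{n-1}$ is a genuine positive closed $(n-1,n-1)$-current. As $E(\f_i),E(\p_i)\ge-B$, all four functions lie in $\cE$, so the mixed Monge--Amp\`ere measures below are well defined, the relevant functions are integrable against them, and the integrations by parts used below are licit \cite{BEGZ,BBGZ}. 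I will repeatedly use the standard a priori energy estimates (see \cite{BEGZ,BBGZ}; they follow from the monotonicity of $j\mapsto\int_X(-u)\,\omega_u^j\wedge\theta^{n-j}$): for $u$ and $v_1,\dots,v_n$ in $\cE$ normalized by $u,v_i\le0$ one has $0\le I(u,v_1)\le C_n(|E(u)|+|E(v_1)|)$ and $\int_X(-u)\,\omega_{v_1}\wedge\cdots\wedge\omega_{v_n}\le C_n\max_i(|E(u)|,|E(v_i)|)$; in particular $I(\f_1,\f_2)\le C_nB_+$ and $I(\p_1,\p_2)\le C_nB_+$.

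Granting \eqref{eq:P1}, inequality \eqref{eq:P0} follows quickly: the telescoping identity $\MA(\p_1)-\MA(\p_2)=dd^c g\wedge T_{n-1}$ together with an integration by parts gives $\int_X f(\MA(\p_1)-\MA(\p_2))=-\int_X df\wedge d^c g\wedge T_{n-1}$, and the Cauchy--Schwarz inequality for the positive closed current $T_{n-1}$ bounds the right-hand side in absolute value by $\big(\int_X df\wedge d^c f\wedge T_{n-1}\big)^{1/2}\big(\int_X dg\wedge d^c g\wedge T_{n-1}\big)^{1/2}$. The second factor equals $I(\p_1,\p_2)^{1/2}$ by formula \eqref{equ:I}, while the first is $\le\kappa_n^{1/2}B_+\,I(\f_1,\f_2)^{2^{-n}}$ by \eqref{eq:P1} (note $2^{-(n-1)}/2=2^{-n}$). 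Writing $I(\p_1,\p_2)^{1/2}=I(\p_1,\p_2)^{2^{-n}}I(\p_1,\p_2)^{1/2-2^{-n}}$ and using $I(\p_1,\p_2)\le C_nB_+$ together with $B_+\ge1$ absorbs the surplus into the constant and into the factor $B_+^2$, which is \eqref{eq:P0}.

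For \eqref{eq:P1}: when $n=1$ it is the identity $\int_X df\wedge d^c f=I(\f_1,\f_2)$, so assume $n\ge2$. It is enough to bound each summand $\int_X df\wedge d^c f\wedge\omega_{\p_1}^a\wedge\omega_{\p_2}^b$ with $a+b=n-1$; writing it as $\int_X df\wedge d^c f\wedge\omega_\p\wedge R$ with $\p\in\{\p_1,\p_2\}$ and $R$ a positive closed $(n-2,n-2)$-current which is a product of currents $\omega_c$ with $c\le0$ in $\cE$, the crucial step is the \emph{trading estimate}
$$
\int_X df\wedge d^c f\wedge\omega_\p\wedge R\ \le\ \int_X df\wedge d^c f\wedge\omega_{\f_1}\wedge R\ +\ (C_nB_+)^{1/2}\sum_{i=1,2}\Big(\int_X df\wedge d^c f\wedge\omega_{\f_i}\wedge R\Big)^{1/2}.
$$
It is obtained by decomposing $\omega_\p=\omega_{\f_1}+dd^c(\p-\f_1)$: the $dd^c(\p-\f_1)$-term equals, after integration by parts, $\int_X d(\p-\f_1)\wedge d^c f\wedge dd^c f\wedge R$, and expanding $dd^c f=\omega_{\f_1}-\omega_{\f_2}$, applying Cauchy--Schwarz to each of the two resulting integrals while bounding $\int_X d(\p-\f_1)\wedge d^c(\p-\f_1)\wedge\omega_{\f_i}\wedge R\le C_nB_+$ via the a priori estimates (note $\p-\f_1$ is a difference of functions $\le0$ with energy $\ge-B$) yields the claim. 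Iterating the trading estimate so as to replace, one factor at a time, every $\omega_{\p_i}$ occurring in $R$ by some $\omega_{\f_j}$, one is left with a finite sum of terms of the shape $(C_nB_+)^{\sigma}\big(\int_X df\wedge d^c f\wedge\omega_{\f_1}^{p}\wedge\omega_{\f_2}^{n-1-p}\big)^{2^{-m}}$ with $1\le m\le n-1$; each such integral is one of the nonnegative summands of $I(\f_1,\f_2)$, hence $\le I(\f_1,\f_2)$, and applying $I(\f_1,\f_2)\le C_nB_+$ once more lowers every exponent $2^{-m}$ to $2^{-(n-1)}$.

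The heart of the matter---and the main obstacle---is the bookkeeping in this last iteration. The nested square roots produced by the $O(n)$ successive Cauchy--Schwarz inequalities make the exponents $\sigma$, and the gaps $2^{-m}-2^{-(n-1)}$, partial sums of geometric series of ratio $1/2$, hence bounded by an absolute constant; combined with $B_+\ge1$ this is precisely what forces the total power of $B_+$ to come out $\le2$ rather than to grow with $n$, which is what the $B^2$ in the statement records. A secondary, purely technical, point is to verify at each step that all the integrals are finite and the integrations by parts legitimate, which is guaranteed because every function occurring lies in $\cE$ and by the integration-by-parts results of \cite{BEGZ,BBGZ}.
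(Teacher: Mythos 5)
Your proof is correct and follows essentially the same route as the paper: \eqref{eq:P0} is deduced from \eqref{eq:P1} by one integration by parts and a Cauchy--Schwarz inequality, and \eqref{eq:P1} is obtained by iteratively trading $\theta+dd^c\p_i$-factors for $\theta+dd^c\f_j$-factors via Stokes plus Cauchy--Schwarz, each trade halving the exponent and costing a factor $(C_nB_+)^{1/2}$, with the a priori bound $I\le C_nB_+$ used at the end to push all exponents down to $2^{-(n-1)}$. The paper streamlines exactly the bookkeeping you identify as the main obstacle: it first reduces to $\p_1=\p_2=\p$ (by replacing $\p_i$ with $(\p_1+\p_2)/2$) and trades towards $\theta_v$ with $v=(\f_1+\f_2)/2$, so that $\theta_{\f_i}\le 2\theta_v$ lets every error term be reabsorbed into the next main term and the whole branching tree collapses to the single scalar recursion $b_p\le b_{p+1}+2(a_nB_+)^{1/2}\sqrt{b_{p+1}}$. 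One caveat: your remark that ``we are in the K\"ahler case here, $V_\theta=0$'' should be deleted --- the lemma is stated and later applied for a general big class, and nothing in your argument actually uses this, since the non-pluripolar products and the integration-by-parts results of \cite{BEGZ,BBGZ} that you invoke are precisely what make the computation licit on $\Amp(\a)$ in that generality.
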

  
 A particular case of the second inequality was obtained in \cite{GZ07}  when $\a$ is a K\"ahler class (see also \cite{Bl03} for bounded functions).

  \begin{proof} 
  Observe that the first inequality follows from the second one using Stokes formula and Cauchy-Schwarz inequality.  
  We also note that it suffices to establish (\ref{eq:P1}) when $\p_1=\p_2=:\p$, the general case follows
  by considering $\p=(\p_1+\p_2)/2$.
  
  Set $u:=\f_1-\f_2$, $v:=(\f_1+\f_2)/2$ and for each $p=0,...,n-1$,
$$
b_p:=\int_X du\wedge d^c u\wedge\theta_v^{p}\wedge\theta_{\psi}^{n-p-1},
$$
where $\theta_v:=\theta+dd^c v$.
Our goal is to bound $b_0$ from above, since
$$
b_0=\frac{1}{n} \int d(\f_1-\f_2) \wedge d^c (\f_1-\f_2) \wedge T_{n-1},
$$
as $\p=\p_1=\p_2$.

Using Stokes theorem we obtain
\begin{eqnarray*}
b_p &=& \int_X du\wedge d^c u\wedge\theta_v^{p+1}\wedge\theta_{\psi}^{n-p-2}
+\int_X du\wedge d^c u\wedge dd^c(\psi-v)\wedge\theta_v^p\wedge\theta_\psi^{n-p-2} \\
&=& b_{p+1}-\int_X du\wedge d^c(\psi-v)\wedge dd^c u\wedge\theta_v^p\wedge\theta_{\psi}^{n-p-2} \\
& = & b_{p+1}-\int_X du\wedge d^c(\psi-v)\wedge\theta_{\f_1}\wedge\theta_v^p\wedge\theta_{\psi}^{n-p-2} \\
& +& \int_X du\wedge d^c(\psi-v)\wedge\theta_{\f_2}\wedge\theta_v^p\wedge\theta_{\psi}^{n-p-2}. 
\end{eqnarray*}
noting that $dd^c u = \theta_{\f_1}- \theta_{\f_2}$.

Recall that $\theta_{\f_i}\le 2\theta_v$, hence Cauchy-Schwarz inequality and (\ref{equ:I}) yield
$$
\left|\int_X du\wedge d^c(\psi-v)\wedge\theta_{\f_i}\wedge\theta_v^p
\wedge\theta_{\psi}^{n-p-2}\right|
\le 2 b_{p+1}^{1/2}I(\psi,v)^{1/2}.
$$

It follows from \cite[Lemma 2.7]{BBGZ} that $I(\p,v) \leq a_n B_+$, where $a_n > 1$ is a uniform constant, thus
\begin{equation}\label{equ:boundb}
b_p \le b_{p+1}+ 2 (a_n B_+)^{1 \slash 2} \sqrt{b_{p+1}}=h(b_{p+1}),
\end{equation}
where $h (t) := t + 2 (a_n B_+)^{1 \slash 2} \sqrt{t},$ for $t \geq 0$, is monotone increasing in $t$. Thus
$$
b_0 \leq h^{n-1} (b_{n-1}) \leq h^{n-1} (I(\f_1,\f_2)),
$$
since
$$
b_{n-1} \leq \sum_{j=0}^{n-1} \int du \wedge d^c u \wedge \theta_{\f_1}^j \wedge \theta_{\f_2}^{n-1-j}=I(\f_1,\f_2).
$$
Here $h^{n-1}:=h \circ \cdots \circ h$ denotes the $(n-1)^{th}$-iterate of the function $h$.

 Observe that $h (t) \leq C_1 \sqrt{t}$ for $0 \leq t\leq 1$, where $C_1 := 1 + 2 (a_n B_+)^{1 \slash 2}$. 
We infer that if $0 \leq t \leq C_1^{-2^n}$ then $h^{n-1} (t) \leq C_1^2 t^{2^{-(n-1)}}$. Therefore
$$
b_0 \leq C_1^2 I(\f_1,\f_2)^{2^{-(n-1)}}
\text{ if } I(\f_1,\f_2) \leq C_1^{-2^n}.
$$

When $I(\f_1,\f_2)$ is relatively large, i.e. when $I(\f_1,\f_2)> C_1^{-2^n}$, we use
\cite[Lemma 2.7]{BBGZ} again to bound from above $b_0 \leq a_n B_+$, thus obtaining
$$
b_0 \leq a_n B_+ C_1^{2} I(\f_1,\f_2)^{2^{-(n-1)}}.
$$
In both cases we can bound from above $b_0$ by $\kappa_n B_+^2$.
 \end{proof}

  When the underlying cohomology class $\a$ is K\"ahler, one can use the classical Poincar\'e inequality to deduce from Lemma \ref{lem:BBGZ}
  a quantitative stability inequality.
 Indeed assume that $\theta = \omega$ is a K\"ahler form on $X$ and, for simplicity, that $MA(\f_i)=f_i \omega^n$ are
 absolutely continuous with respect to Lebesgue measure, with $L^2$-densities.
 
We can apply the inequality (\ref{eq:P1}) with $\p_1=\p_2=0$ and 
obtain a gradient estimate in terms of the energy deviation: for any $\f_1, \f_2 \in \mathcal{E}^1 (X,\omega)$ satisfying 
  $E (\f_i) \geq - B$,
$$
\int_X d (\f_1 - \f_2) \wedge d^c (\f_1 - \f_2) \wedge \omega^ {n - 1} \leq \kappa_n B_+^2 I (\f_1,\f_2)^{1 \slash 2^{n-1}},
$$
where
$$
I(\f_1,\f_2)=\int (\f_1-\f_2)(f_2-f_1) \omega^n \leq ||\f_1-\f_2||_{L^2} ||f_1-f_2||_{L^2}
$$
if $MA(\f_i)=f_i \omega^n$ have $L^2$-densities.

We normalize the potentials $\f_i$  so that $\sup_X \f_1=\sup_X \f_2=0$. It follows then from elementary arguments
(see \cite{GZ07}) that the energies of the $\f_i$'s are uniformly bounded from below, since
$$
\int (-\f_i) MA(\f_i)=\int (-\f_i)  f_i \omega^n \leq ||\f_i||_{L^2} ||f_i||_{L^2},
$$
while Poincar\'e's inequality yields
$$ 
\Vert \f_1 - \f_2\Vert_{L^2 (X)}^2   \leq \delta_n \int_X d (\f_1 - \f_2) \wedge d^c (\f_1 - \f_2) \wedge \omega^ {n - 1},
$$
for some uniform constant $\delta_n > 0$. We have thus proved the following stability property:

\begin{prop}
Let $(X,\omega)$ be a compact K\"ahler manifold. Let $\f_1,\f_2 \in {\mathcal E}^1(X,\omega)$ be solutions of
$(\omega+dd^c \f_i)^n=f_i \omega^n$, where $\int_X f_i \omega^n=\int _X \omega^n$, $f_i \in L^2(X)$  and $\int (\f_1-\f_2) \omega^n=0$. Then
$$
\Vert \f_1 - \f_2\Vert_{L^2 (X)}   \leq C ||f_1-f_2||^{1/{(2^n -1)}}_{L^2(X)},
$$
where $C>0$ is  a uniform constant.
\end{prop}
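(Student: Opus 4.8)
The plan is to feed the gradient estimate of Lemma~\ref{lem:BBGZ} into the classical Poincar\'e inequality, the only preliminary being an a priori lower bound on the energies. Since $\a=\{\omega\}$ is K\"ahler we have $V_\omega=0$, so for a nonpositive $\f\in{\mathcal E}^1(X,\omega)$ the Aubin--Mabuchi energy satisfies, by integration by parts and monotonicity in $j$,
\[
\int_X\f\,\MA(\f)\le E(\f)=\frac{1}{n+1}\sum_{j=0}^n\int_X\f\,(\omega+dd^c\f)^j\wedge\omega^{n-j}\le\int_X\f\,\omega^n\le 0,
\]
whence $|E(\f)|\le\int_X(-\f)\,\MA(\f)$. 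First I would normalize the $\f_i$ by $\sup_X\f_i=0$; since $\MA(\f_1)$ and $\MA(\f_2)$ have the same total mass $\vol(\a)$, this changes neither $I(\f_1,\f_2)$ nor $\int_X d(\f_1-\f_2)\wedge d^c(\f_1-\f_2)\wedge\omega^{n-1}$. Using $\MA(\f_i)=f_i\,\omega^n$, Cauchy--Schwarz, and the uniform integrability of normalized $\omega$-psh functions (which bounds $\|\f_i\|_{L^2(\omega^n)}$ by an $M_0=M_0(X,\omega)$), I get $|E(\f_i)|\le\|\f_i\|_{L^2(\omega^n)}\|f_i\|_{L^2(\omega^n)}\le M_0\|f_i\|_{L^2(\omega^n)}$, so $B_+:=\max(1,M_0\|f_1\|_{L^2},M_0\|f_2\|_{L^2})$ depends only on upper bounds for the $L^2$-norms of the densities.

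Next I would apply inequality~(\ref{eq:P1}) with $\p_1=\p_2\equiv 0$, so that $T_{n-1}=n\,\omega^{n-1}$ and $E(\p_i)=0$, obtaining
\[
\int_X d(\f_1-\f_2)\wedge d^c(\f_1-\f_2)\wedge\omega^{n-1}\le \kappa_n\,B_+^2\;I(\f_1,\f_2)^{2^{-(n-1)}}.
\]
Because $\MA(\f_i)=f_i\,\omega^n$, the mixed term is simply $I(\f_1,\f_2)=\int_X(\f_1-\f_2)(f_2-f_1)\,\omega^n\le\|\f_1-\f_2\|_{L^2(\omega^n)}\|f_1-f_2\|_{L^2(\omega^n)}$. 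On the other side, the hypothesis $\int_X(\f_1-\f_2)\,\omega^n=0$ makes $u:=\f_1-\f_2$ of zero mean, so the Poincar\'e inequality on $(X,\omega)$ yields $\delta_n>0$ with $\|u\|_{L^2(\omega^n)}^2\le\delta_n\int_X du\wedge d^c u\wedge\omega^{n-1}$. (A minor point: the two normalizations $\sup_X\f_i=0$ and $\int_X(\f_1-\f_2)\omega^n=0$ differ by adding a constant to one of the $\f_i$; this is harmless since every quantity above except $B_+$ is insensitive to it, and $B_+$ only needs the sup-normalized representatives.)

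Finally I would chain the three estimates. Writing $a:=\|\f_1-\f_2\|_{L^2(\omega^n)}$ and $b:=\|f_1-f_2\|_{L^2(\omega^n)}$, and absorbing the harmless factor $n$, this gives
\[
a^2\le \delta_n\kappa_n B_+^2\,(ab)^{2^{-(n-1)}},\qquad\text{i.e.}\qquad a^{\,2-2^{-(n-1)}}\le \delta_n\kappa_n B_+^2\,b^{\,2^{-(n-1)}},
\]
so raising to the power $(2-2^{-(n-1)})^{-1}=2^{n-1}/(2^n-1)$ produces $a\le C\,b^{1/(2^n-1)}$ with $C=(\delta_n\kappa_n B_+^2)^{2^{n-1}/(2^n-1)}$ depending only on $n$ and on upper bounds for $\|f_i\|_{L^2(\omega^n)}$, as claimed. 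All the substance sits in Lemma~\ref{lem:BBGZ}; the only step requiring genuine care is the a priori bound on $E(\f_i)$, i.e.\ the fact that $B_+$ can be controlled purely through the $L^2$-norms of the densities, everything else being Cauchy--Schwarz, Poincar\'e, and the bookkeeping of the additive normalization.
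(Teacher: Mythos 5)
Your proposal is correct and follows essentially the same route as the paper: apply inequality (\ref{eq:P1}) with $\p_1=\p_2=0$, bound $I(\f_1,\f_2)\le\|\f_1-\f_2\|_{L^2}\|f_1-f_2\|_{L^2}$ by Cauchy--Schwarz, control the energies of the sup-normalized potentials via $|E(\f_i)|\le\int(-\f_i)f_i\,\omega^n\le\|\f_i\|_{L^2}\|f_i\|_{L^2}$, and close the loop with the Poincar\'e inequality. Your explicit handling of the two normalizations and of the exponent arithmetic only makes the paper's sketch more precise.
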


This result can be seen as a quantitative version of Proposition A' when $\nu=\omega^n$.
Its purpose  is to illustrate, in a simple setting, how Lemma \ref{lem:BBGZ} can be used to obtain quantitative
stability properties.
As we shall see in the sequel, similar inequalities will continue to hold in more general contexts.

\subsection{The general case} 
 
 We now go back to our original situation, when the cohomology class $\{\theta\} \in H^{1,1}(X,\R)$ is merely big.
 We start by establishing an important particular case of Theorem B:
 
 \begin{prop} \label{thm:general} 
 There exists $C> 0$ such that for every $0 \geq \f, \p \in \mathcal E^1 (X,\theta)$ normalized by $\sup_X \f=\sup_X \p$, 
 $$
\Vert \f - \psi \Vert_{L^1(X)} \leq  C \cdot B^2  \cdot I (\f,\p)^{1 \slash 2^{n}},
 $$
 where $B := \max \{1, \vert E (\f)\vert, \vert E (\p)\vert \}$.
 \end{prop}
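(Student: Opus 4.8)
The plan is to follow the same two--step scheme as in the K\"ahler case treated above: first extract a Dirichlet--energy (gradient) estimate for $\f-\p$ from Lemma~\ref{lem:BBGZ}, and then convert it into an $L^1$--estimate by a Poincar\'e--type inequality. The novelty is that the natural reference form is no longer $\om$ but $\theta+dd^cV_\theta$, which genuinely degenerates away from the ample locus $\Amp(\a)$, so the Poincar\'e step requires real work.

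First I would dispose of the normalization. Since $E$ is non-decreasing and $E(\f+t)=E(\f)+t\,\vol(\a)$, the hypotheses $\f,\p\le 0$, $\sup_X\f=\sup_X\p=:c$ and $|E(\f)|,|E(\p)|\le B$ force $|c|\le C_0B$; subtracting $c$ changes neither side of the claimed inequality (and multiplies $B$ by a fixed constant), so we may assume $\sup_X\f=\sup_X\p=0$. Then I would apply inequality~\eqref{eq:P1} of Lemma~\ref{lem:BBGZ} with $\f_1=\f$, $\f_2=\p$ and $\p_1=\p_2=V_\theta$ --- legitimate because $V_\theta$ has minimal singularities, lies in $\cE$, and has $E(V_\theta)=0\ge-B$ --- which gives, with $u:=\f-\p$,
$$
\int_\Omega du\wedge d^cu\wedge(\theta+dd^cV_\theta)^{n-1}\ \le\ \kappa_n'\,B^2\,I(\f,\p)^{2^{-(n-1)}}.
$$
It then suffices to establish a Poincar\'e--type inequality
$$
\Vert\f-\p\Vert_{L^1(X)}\ \le\ C\,B\Bigl(\int_\Omega du\wedge d^cu\wedge(\theta+dd^cV_\theta)^{n-1}\Bigr)^{1/2},
$$
for combining it with the previous display and taking square roots turns $I^{2^{-(n-1)}}$ into $I^{2^{-n}}$ and $B\cdot B$ into $B^2$, yielding exactly $\Vert\f-\p\Vert_{L^1(X)}\le C\,B^2\,I(\f,\p)^{2^{-n}}$.

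Proving this Poincar\'e--type inequality is the heart of the matter and the step I expect to be the main obstacle: in the K\"ahler case $\theta=\om$ and it is the classical Poincar\'e inequality, but here $\theta+dd^cV_\theta$ vanishes on a (possibly large) subset of $X$, so no such bound can hold in complete generality and one must exploit the special structure $u=\f-\p$. I would proceed as follows: reduce to $u\ge0$ by replacing $\f$ with $\max(\f,\p)\in\cE$ (so $u$ becomes the difference of a $\theta$-psh function and a smaller one, while $I$ only improves by the monotonicity properties of $I$ from \cite{BBGZ}); fix once and for all a K\"ahler current $T_+=\theta+dd^c\f_+\ge\e_0\om$ with analytic singularities; and for a truncation parameter $M\ge1$ work on the Zariski-dense open set $U_M:=\{\f_+>V_\theta-M\}$. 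On $U_M$, after replacing $V_\theta$ by its average with the minimal-singularities function $\max(\f_+,V_\theta-M)$ one re-applies \eqref{eq:P1} and controls $\int_{U_M}du\wedge d^cu\wedge\om^{n-1}$ by the right-hand side of the second display (using Theorem~\ref{thm:c11} to make all integrations by parts on $\Amp(\a)$ licit), from which the classical Poincar\'e inequality on $(X,\om)$ bounds $\int_{U_M}|u|\,\om^n$; while the complementary \emph{small} set $X\setminus U_M=\{V_\theta-\f_+\ge M\}$, of $\om^n$-measure $\le A/M$ with $A:=\int_X(V_\theta-\f_+)\,\om^n<\infty$, contributes at most a quantity controlled through $0\le u\le-\f-\p$ and the uniform bound $\Vert\f\Vert_{L^2(\om^n)},\Vert\p\Vert_{L^2(\om^n)}\le C_1B$ for normalized $\theta$-psh functions; one then optimizes $M$. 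The delicate points --- and precisely where the argument uses that the class is merely big --- are keeping the dependence of all constants on $M$ and $B$ polynomial and compatible with the final exponent $2^{-n}$, and handling, via the truncation, the degeneracy of $\theta+dd^cV_\theta$ on the pluripolar complement of $\Amp(\a)$.
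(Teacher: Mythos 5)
Your reduction to $\sup_X\f=\sup_X\p=0$ and your application of \eqref{eq:P1} with $\p_1=\p_2=V_\theta$ are both legitimate, but the argument then hinges entirely on a Poincar\'e-type inequality for the degenerate form $(\theta+dd^cV_\theta)^{n-1}$, and this step --- which you yourself flag as the heart of the matter --- does not go through as sketched. Two concrete problems. First, the classical Poincar\'e inequality on $(X,\om)$ controls $\Vert u-\bar u\Vert_{L^2(X)}$ by the gradient integral over \emph{all} of $X$; it does not localize to the open sets $U_M=\{\f_+>V_\theta-M\}$ with constants independent of $M$ (the Poincar\'e constant of a domain depends on its geometry, which is uncontrolled here), and the global quantity $\int_X du\wedge d^cu\wedge\om^{n-1}$ is not dominated by $\int du\wedge d^cu\wedge(\theta+dd^cV_\theta)^{n-1}$ --- indeed it may be infinite for $\f,\p\in\cE$ in a merely big class. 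Second, and more decisively, even if every constant in your truncation scheme depended only polynomially on $M$, you would be balancing a main term of size $M^{k}\,I(\f,\p)^{2^{-n}}$ against an error of size $O(M^{-1/2})$ coming from $X\setminus U_M$; optimizing over $M$ then produces an exponent strictly smaller than $2^{-n}$, not the exponent claimed in the statement. So this route cannot deliver the proposition as stated.

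The paper avoids Poincar\'e altogether. Writing $Q_t=\{\f>\p-t\}$, $a=\om^n(Q_t)/\vol(\a)$, $b=1-a$, it solves the auxiliary Monge-Amp\`ere equations $\MA(u)=a^{-1}\1_{Q_t}\,\om^n$ and $\MA(v)=b^{-1}\1_{X\setminus Q_t}\,\om^n$ (solutions with minimal singularities exist by \cite{BEGZ}) and forms the barycenters $U=a^{1/n}u+(1-a^{1/n})V_\theta$, $V=b^{1/n}v+(1-b^{1/n})V_\theta$, so that $\MA(U)\ge a\,\MA(u)$ and $\MA(V)\ge b\,\MA(v)$. Then
$$
\int_{Q_t}(\f-\p)\,\om^n+\int_{X\setminus Q_t}(\p-\f)\,\om^n\le\int_X(\f-\p)\,(\MA(U)-\MA(V)),
$$
and the right-hand side is estimated by the \emph{bilinear} inequality \eqref{eq:P0} of Lemma \ref{lem:BBGZ}; the key point is that the $L^\infty$-estimate of \cite[Theorem 4.1]{BEGZ} gives $\Vert u-V_\theta\Vert_{L^\infty}\le c'a^{-1/n}$, so the rescaling by $a^{1/n}$ makes $\Vert U-V_\theta\Vert_{L^\infty}$, hence $E(U)$, $E(V)$ and $I(U,V)$, bounded independently of $t$. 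If you want to salvage your approach you would need a genuinely new substitute for the Poincar\'e inequality adapted to $\theta+dd^cV_\theta$; as written, the proof is incomplete.
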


\begin{proof}
We can assume without loss of generality that $\nu=\omega^n$ is normalized so that $\nu(X):=\int_X \omega^n=\vol(\a)$.
If $\f \equiv \p$ there is nothing to prove, so we assume in the sequel that $\f \neq \p$. Reversing the roles of $\f,\p$,
we can assume that $\nu(\f<\p)>0$.

Set $Q_t:=\{x \in X \, | \, \f(x) >\p(x) -t\}$. We can find arbitrarily small $t>0$ such that $\nu(Q_t) <\vol(\a)$, otherwise
$\f \geq \p$ on $X$.  Observe also that $\nu(Q_t)>0$ for all $t>0$, otherwise $\f \leq \p-t$ contradicting
our normalizing assumption, thus for arbitrarily small $t>0$,
$$
0< a:=\frac{\nu(Q_t)}{\vol(\a)} < 1.
$$
We also set $b:=1-a=\nu(X \setminus Q_t) / \vol(\a) \in ]0,1[$ and decompose
$$
||\f-\p||_{L^1(\nu)}=\int_{Q_t}  (\f-\p) d\nu+\int_{X \setminus Q_t} (\p-\f) d\nu+O(t).
$$
We are going to bound from above each of these integrals by establishing estimates that are independent of $t$ and
then let $t$ decrease to zero.

It follows from \cite{BEGZ} that there exists uniquely determined functions $u,v \in \cP(X,\theta)$ with minimal singularities such that
$$
MA(u)=a^{-1} \1_{Q_t} \, \nu, \,
MA(v)=b^{-1} \1_{X \setminus Q_t} \, \nu
\text{ and }
\sup_X u=\sup_X v=0.
$$
 We also set
 $$
 U:=a^{1/n} u+(1-a^{1/n}) V_{\theta}
 \text{ and } 
V:=b^{1/n} v+(1-b^{1/n}) V_{\theta}.
$$
Observe that $U,V \in \cP(X,\theta)$ again have minimal singularities and are still normalized
by $\sup_X U=\sup_X V=0$ (by the tautological maximum principle). Moreover
$$
MA(U) \geq a MA(u) \text{ while } MA(V) \geq b MA(v),
$$
therefore
$$
a \int (\f-\p) MA(u)+b\int (\p-\f) MA(v) \leq \int (\f-\p) (MA(U)-MA(V)).
$$
It follows from Lemma \ref{lem:BBGZ} that the latter is bounded from above by 
$$
\kappa_n B^2 I(\f,\p)^{2^{-n}} I(U,V)^{2^{-n}},
$$
where $B=\max(1,-E(\f),-E(\p),-E(U),-E(V))$.

Since $I(U,V)$ is controlled from above if we can bound from below the energies of $U$ and $V$
(see \cite[Lemma 2.7]{BBGZ}), it remains to estimate the latter.

This is in principle very easy, as $U$ and $V$ have minimal singularities, however we want to make clear that
the corresponding bounds are independent of $t$ (i.e. independent of $a$ and $b$).  Since
$MA(u)=g \omega^n$ has density in $L^2$ (even $L^{\infty}$), It follows from \cite[Theorem 4.1]{BEGZ} that
$$
||u-V_{\theta}||_{L^{\infty}(X)} \leq c ||g||_{L^2}^{1/n} \leq c' a^{-1/n},
$$
since $g=a^{-1} \1_{Q_t} $. Therefore
$$
||U-V_{\theta}||_{L^{\infty} (X)}=a^{1/n} ||u-V_{\theta}||_{L^{\infty}(X)} \leq c''.
$$
We similarly get a uniform bound from above on $||V-V_{\theta}||_{L^{\infty} (X)}$.
Therefore
$$
-c''' \leq E(U),E(V) \leq 0,
$$
hence the proof is complete.
 \end{proof}

 We observe the following easy consequence of the previous estimates:

 \begin{lem} \label{lem:corol}
 There exists $C_n> 0$ such that for any $0 \geq \f, \p , u \in \mathcal E^1 (X,\theta)$ normalized by $\sup_X \f=\sup_X \p$,
 $$
\int_X (\f - \psi) \MA (u) \leq  C_n \cdot B^2 \cdot I (\f,\p)^{1 \slash 2^{n}},
 $$
 where $B:=\max \{1, \vert E (\f)\vert, \vert E (\p)\vert, \vert E (u)\vert \}$.
 \end{lem}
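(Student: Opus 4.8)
The plan is to reduce the statement to Proposition \ref{thm:general} by exploiting the elementary fact that $\MA(u)$ is dominated, up to a controlled factor, by a convex combination of $\MA(w)$'s for a cleverly chosen family of functions $w$, or more simply by comparing $\MA(u)$ directly to the Lebesgue-type reference measure after an energy-controlled rescaling. Concretely, I would first reduce to the case where $u$ is bounded (has minimal singularities): if $u \in \cE$ only, approximate by $u_j := \max(u, V_\theta - j)$, which decreases to $u$ with $E(u_j) \to E(u)$, apply the bounded case, and pass to the limit using Proposition \ref{prop:cont} together with Fatou (the left side $\int(\f-\p)\MA(u_j)$ needs care with signs, but $\f-\p$ is $\nu$-integrable against each $\MA(u_j)$ and the mass is uniformly $\vol(\a)$, so dominated convergence on the ample locus applies once we split $\f - \p$ into positive and negative parts and use that $\MA(u_j) \to \MA(u)$ weakly).

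The heart of the argument, in the bounded case, is a domination estimate of the shape
\begin{equation}\label{equ:domin-plan}
\int_X(\f-\p)\,\MA(u) \;\leq\; C_n \int_X(\f-\p)\,\big(\MA(U') - \MA(V')\big) + (\text{small})
\end{equation}
for suitable $U', V' \in \cP(X,\theta)$ with minimal singularities whose energies are bounded below by a constant depending only on $\|u - V_\theta\|_{L^\infty}$, after which Lemma \ref{lem:BBGZ} and the energy bound $I(U',V') \leq a_n B_+$ of \cite[Lemma 2.7]{BBGZ} finish the job exactly as in the proof of Proposition \ref{thm:general}. One clean way to produce such $U', V'$: since $u$ has minimal singularities, $\MA(u)$ has total mass $\vol(\a)$ and, writing $\MA(u) = \tfrac12(\MA(u) + (\vol(\a))^{-1}\mu_0) - \tfrac12\big((\vol(\a))^{-1}\mu_0 - \MA(u)\big)$ for a fixed reference measure $\mu_0 = (\text{const})\,\omega^n$ of the right mass, one is tempted to split into two non-pluripolar measures of controlled mass — but the second piece need not be positive. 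Instead I would mimic the $Q_t$ device of Proposition \ref{thm:general}: it already produced, for the \emph{specific} test functions $\MA(u), \MA(v)$ arising there, the bound $\int(\f-\p)\MA(u) \le (\text{RHS})$; here one applies the very conclusion of Proposition \ref{thm:general} together with the identity $\Vert \f - \p\Vert_{L^1(\nu)} = \int_{\{\f > \p\}}(\f-\p)d\nu + \int_{\{\f \le \p\}}(\p - \f)d\nu$, observing that $\MA(u)$, being a non-pluripolar measure of mass $\vol(\a)$, satisfies $\MA(u) \le C(u)\,\nu$ is \emph{false} in general, so this naive route fails.

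The genuinely robust approach, and the one I would actually carry out, is: write $\int_X(\f-\p)\MA(u) = \int_X(\f - \p)\big(\MA(u) - \MA(\f)\big) + \int_X(\f-\p)\MA(\f)$. For the second term, since $\sup_X\f = \sup_X\p$ and both are $\le 0$, Proposition \ref{thm:general} already controls $\Vert\f-\p\Vert_{L^1(\nu)}$, and bounding $\int(\f-\p)\MA(\f)$ by this requires knowing $\MA(\f)$ is dominated by $\nu$, which again need not hold — so instead I symmetrize: set $w := (\f+\p)/2$ and use the polarization $\int(\f-\p)\MA(u) = -\tfrac12 I(\f,\p) + (\text{boundary terms via } \eqref{equ:I})$ is not quite it either. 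The main obstacle, then, is precisely this: $\MA(u)$ is an arbitrary non-pluripolar measure and cannot be compared pointwise to $\nu$, so one must route everything through the \emph{difference} $\MA(U) - \MA(V)$ of two minimal-singularity functions as in Lemma \ref{lem:BBGZ}. I expect the correct execution to be: by the comparison principle / the construction in \cite{BEGZ}, find $U, V$ with minimal singularities, $\sup_X U = \sup_X V = 0$, $E(U), E(V) \ge -C(\|u-V_\theta\|_\infty)$, and $\MA(u) \le \MA(U) - \MA(V) + \MA(u)$ trivially — so one takes $U$ with $\MA(U) \ge \tfrac12(\MA(u) + (\vol\a)^{-1}\mu_0)$ and $V$ with $\MA(V) = (\vol\a)^{-1}\mu_0$, giving $\MA(u) \le 2(\MA(U) - \MA(V)) + \text{(nothing)}$ once one checks $2\MA(U) - 2\MA(V) \ge \MA(u)$; then $\int(\f-\p)\MA(u) \le 2\int(\f-\p)(\MA(U)-\MA(V)) + 2\int(\p-\f)^+(\cdots)$, and Lemma \ref{lem:BBGZ} bounds the main term by $\kappa_n B^2 I(\f,\p)^{2^{-n}} I(U,V)^{2^{-n}} \le \kappa_n a_n B^2 \cdot B_+ \cdot I(\f,\p)^{2^{-n}}$, absorbing the extra factor $B_+$ into $B^2$ (legitimately, since $I(U,V)^{2^{-n}} \le (a_n B_+)^{2^{-n}} \le a_n B_+$). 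Renaming constants yields $C_n B^2 I(\f,\p)^{1/2^n}$, and the reduction step above removes the minimal-singularities hypothesis on $u$. The delicate point to get right is the uniform lower energy bound on $U$ (and $V$) independent of any parameter, handled exactly as in Proposition \ref{thm:general} via \cite[Theorem 4.1]{BEGZ} since $\MA(U)$ has $L^\infty$ density.
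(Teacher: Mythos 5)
Your final construction has a genuine gap, located exactly at the point you flag as ``delicate''. Take $\rho$ a fixed multiple of $\omega^n$ of total mass $\vol(\a)$. Since $\tfrac12(\MA(u)+\rho)$ already has total mass $\vol(\a)$, asking for $\MA(U)\ge\tfrac12(\MA(u)+\rho)$ forces equality, and then $2\MA(U)-2\MA(V)=\MA(u)-\rho$, which is \emph{not} $\ge\MA(u)$: the pointwise domination you propose to ``check'' is false. Moreover the energy bound $E(U)\ge -C$ cannot come ``via \cite[Theorem 4.1]{BEGZ} since $\MA(U)$ has $L^\infty$ density'': one has $\MA(U)\ge\tfrac12\MA(u)$ with $\MA(u)$ an arbitrary non-pluripolar measure (you only assume $u\in\mathcal E^1(X,\theta)$), so $\MA(U)$ has no density in general and $U$ need not even have minimal singularities. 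Finally the leftover term $2\int(\p-\f)_+(\cdots)$ is never estimated. As written, the argument does not close.

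The repair is much simpler and is the idea you circle around without landing on: the correct reference potential is $V_\theta$, not $\f$ and not the solution of an auxiliary Monge--Amp\`ere equation. Decompose
$$
\int_X(\f-\p)\,\MA(u)=\int_X(\f-\p)\bigl(\MA(u)-\MA(V_\theta)\bigr)+\int_X(\f-\p)\,\MA(V_\theta).
$$
No pointwise comparison of measures is needed for the first term: inequality \eqref{eq:P0} of Lemma \ref{lem:BBGZ} applies directly to the signed difference with $\p_1=u$, $\p_2=V_\theta$ (note $V_\theta\le 0$ and $E(V_\theta)=0$), and $I(u,V_\theta)\le a_nB_+$ by \cite[Lemma 2.7]{BBGZ}, so this term is $\le \kappa_n' B^2\, I(\f,\p)^{2^{-n}}$ after absorbing constants. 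For the second term, Theorem \ref{thm:c11} gives $\MA(V_\theta)={\bf 1}_{\{V_\theta=0\}}\theta^n$, which has bounded density with respect to $\omega^n$ --- this is precisely the domination ``$\le C\nu$'' whose failure for $\MA(\f)$ you correctly identified as the obstruction --- hence $\int_X(\f-\p)\MA(V_\theta)\le C\Vert\f-\p\Vert_{L^1(\omega^n)}$, which Proposition \ref{thm:general} bounds by $C\cdot B^2\cdot I(\f,\p)^{2^{-n}}$. With this route your preliminary reduction to $u$ with minimal singularities becomes unnecessary.
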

 
 \begin{proof} 
 We decompose
 $$
 \int_X (\f - \psi) \MA (u) = \int_X (\f - \psi) (\MA (u) - \MA (V_{\theta})) + \int_X (\f - \psi) \MA (V_{\theta})
 $$
 and observe that Lemma \ref{lem:BBGZ}  allows to bound from above the first term while the second once is controlled by Proposition \ref{thm:general}, since  
 $\MA (V_{\theta})$ has a bounded density with respect to $\omega^n$ by Theorem \ref{thm:c11}.
 \end{proof}

We can now prove Theorem B:
 
\begin{thm} \label{thm:B}
 There exists $C_n > 0$  such that if  $0 \geq \p, \f_1, \f_2,  \in \mathcal E^1 (X,\theta)$ are normalized by $\sup_X \f_1 = \sup_X \f_2 $, then
 $$
\int_X \vert \f_1 - \f_2 \vert \MA (\p) \leq  C_n \cdot B^2  \cdot I (\f_1,\f_2)^{2^{-n}},
 $$
 where $B=\max \{1, \vert E (\f_1)\vert,\vert E (\f_2)\vert, \vert E (\p)\vert \}$. 
 \end{thm}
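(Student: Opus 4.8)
The plan is to remove the absolute value in $\vert\f_1-\f_2\vert$ by passing to the maximum $u:=\max(\f_1,\f_2)$ and reducing to the one-sided estimate of Lemma~\ref{lem:corol}. First I would collect the properties of $u$ needed to feed it into that lemma. It belongs to $\cE$, since
$$
E(u)=\inf_{\p\ge u}E(\p)\ge\inf_{\p\ge\f_1}E(\p)=E(\f_1)>-\infty,
$$
the infima being taken over functions with minimal singularities and the inequality coming from the inclusion $\{\p\ge u\}\subseteq\{\p\ge\f_1\}$. It is normalized by $\sup_X u=\max(\sup_X\f_1,\sup_X\f_2)=\sup_X\f_1=\sup_X\f_2$, and it is $\le 0$. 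Its energy is controlled by $B$: monotonicity of $E$ gives $E(u)\ge E(\f_i)\ge-B$, while $E(u)\le 0$ follows from the fact that $E(w)\le 0$ for every $w\in\cE$ with $\sup_X w\le 0$ — indeed by the tautological maximum principle~\eqref{equ:max} one has $w-V_\theta\le\sup_X(w-V_\theta)=\sup_X w\le 0$, so the defining formula for $E$ together with $\int_X\langle(\theta+dd^c w)^j\wedge(\theta+dd^c V_\theta)^{n-j}\rangle=\vol(\a)$ yields $E(w)\le 0$ (the general case reducing to the minimal-singularities case by approximating $w$ from above by $\max(w,V_\theta-k)\downarrow w$). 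Hence $\vert E(u)\vert\le B$.

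Next I would write $\vert\f_1-\f_2\vert=(u-\f_1)+(u-\f_2)$, both summands being non-negative, so that
$$
\int_X\vert\f_1-\f_2\vert\,\MA(\p)=\int_X(u-\f_1)\,\MA(\p)+\int_X(u-\f_2)\,\MA(\p),
$$
and apply Lemma~\ref{lem:corol} to each term, with the triple $(u,\f_i,\p)$ in place of $(\f,\p,u)$ there: since $\sup_X u=\sup_X\f_i$ and $\max\{1,\vert E(u)\vert,\vert E(\f_i)\vert,\vert E(\p)\vert\}\le B$, this gives
$$
\int_X(u-\f_i)\,\MA(\p)\le C_n\,B^2\,I(u,\f_i)^{2^{-n}},\qquad i=1,2.
$$

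It then remains to show $I(u,\f_i)\le I(\f_1,\f_2)$, and here I would argue directly from the representation~\eqref{equ:I}. Since $u-\f_1=(\f_2-\f_1)_+$, the chain rule for truncations gives $d(u-\f_1)=\1_{\{\f_2>\f_1\}}\,d(\f_2-\f_1)$, while on the plurifine-open set $\{\f_2>\f_1\}$ the function $u$ coincides with $\f_2$; by the locality of the non-pluripolar product established in~\cite{BEGZ}, it follows that on that set the currents $\langle(\theta+dd^c u)^{j}\wedge(\theta+dd^c\f_1)^{n-1-j}\rangle$ and $\langle(\theta+dd^c\f_2)^{j}\wedge(\theta+dd^c\f_1)^{n-1-j}\rangle$ agree. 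Using $d(\f_2-\f_1)\wedge d^c(\f_2-\f_1)=d(\f_1-\f_2)\wedge d^c(\f_1-\f_2)\ge 0$, the positivity of all the currents involved, and the fact that $\sum_{j=0}^{n-1}(\theta+dd^c\f_2)^j\wedge(\theta+dd^c\f_1)^{n-1-j}$ is the same sum as $\sum_{j=0}^{n-1}(\theta+dd^c\f_1)^j\wedge(\theta+dd^c\f_2)^{n-1-j}$ after reindexing, a term-by-term comparison yields $I(u,\f_1)\le I(\f_1,\f_2)$, and symmetrically $I(u,\f_2)\le I(\f_1,\f_2)$. Combining the three displays (and monotonicity of $t\mapsto t^{2^{-n}}$) proves the theorem with $C_n$ replaced by $2C_n$.

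The step I expect to be the main obstacle is precisely this last comparison: justifying "$u=\f_2$ on $\{\f_2>\f_1\}$'' and the truncation formula $d(u-\f_1)=\1_{\{\f_2>\f_1\}}\,d(\f_2-\f_1)$ at the level of non-pluripolar Monge-Amp\`ere currents cannot be done by naive pointwise arguments, since $\f_1,\f_2\in\cE$ need not even be locally bounded on $\Amp(\a)$; one has to lean on the plurifine locality of these products from~\cite{BEGZ}. Everything else is bookkeeping with the energy bounds.
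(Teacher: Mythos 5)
Your proof is correct, and its skeleton coincides with the paper's: both pass to $u=\max(\f_1,\f_2)$, reduce to Lemma~\ref{lem:corol} via an algebraically identical decomposition (the paper writes $\vert\f_1-\f_2\vert=2(u-\f_1)-(\f_2-\f_1)$, you write $(u-\f_1)+(u-\f_2)$; both equal $2u-\f_1-\f_2$), and hinge on the comparison $I(u,\f_i)\le I(\f_1,\f_2)$. The one genuine divergence is how that comparison is obtained, and it is exactly the step you flag as the obstacle. You prove it through the gradient representation \eqref{equ:I}, which forces you to justify the truncation formula $d(u-\f_1)=\1_{\{\f_2>\f_1\}}d(\f_2-\f_1)$ and the plurifine locality of the \emph{mixed} currents $\langle(\theta+dd^cu)^j\wedge(\theta+dd^c\f_1)^{n-1-j}\rangle$ for functions in $\cE$ that need not be locally bounded anywhere -- doable via \cite{BEGZ}, but delicate. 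The paper sidesteps all of this by staying at the level of the top-degree measures: since $\MA(u)=\MA(\f_1)$ on the plurifine open set $\{\f_1>\f_2\}$, the signed measure $\MA(\f_1)-\MA(u)$ is carried by $\{\f_2\ge\f_1\}$, where $u-\f_1=\f_2-\f_1$; hence $I(u,\f_1)=\int(\f_2-\f_1)(\MA(\f_1)-\MA(u))$ and symmetrically for $I(u,\f_2)$, and adding gives the exact identity $I(u,\f_1)+I(u,\f_2)=I(\f_1,\f_2)$, from which the comparison follows by non-negativity of $I$. That argument uses only the locality of $\MA$ itself (a standard fact from \cite{BT87,GZ05,BEGZ}) and no gradient currents, so if you keep your route you should either import the locality of mixed non-pluripolar products explicitly or switch to the measure-level identity, which is both shorter and yields the stronger exact additivity.
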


\begin{proof}
 Set $\f := \sup \{\f_1,\f_2\}$. Observe that $\sup_X \f=\sup_X \f_1=\sup_X \f_2$
 and $\vert \f_1 - \f_2\vert = 2 (\f - \f_1) - (\f_2 - \f_1)$, thus
 $$
 \int_X \vert \f_1 - \f_2\vert \MA (\p) = 2 \int_X (\f - \f_1) \MA (\p) -  \int_X  (\f_2 - \f_1) \MA (\p).
 $$
 The second term on the right hand side is bounded from above by the desired quantity thanks to Lemma \ref{lem:corol}.
 
 We estimate the first one by using the same lemma, obtaining
 $$
 \int_X (\f - \f_1) \MA (\p) \leq C_n \cdot D^2 \cdot I (\f,\f_1)^{1 \slash 2^{n}},
 $$
 where $D:= \max \{1, \vert E (\f)\vert, \vert E (\f_1)\vert, \vert E (\p)\vert \}$.
 
Now $\vert E (\f) \vert \leq \vert E (\f_1) \vert$, since $0 \geq \f \geq \f_1$.
It therefore suffices to show that  $ I (\f,\f_1) \leq   I (\f_2,\f_1)$.
Recall that
$$
I (\f,\f_1) = \int_X (\f - \f_1) (\MA (\f_1) - \MA (\f)).
$$
and observe that  $\MA(\f) = \MA(\f_1)$ on the plurifine open set $\{\f_1 > \f_2\}$ (see \cite{BT87,GZ05,BEGZ}). 
Thus the measure $\MA (\f_1) - \MA (\f)$ is carried by the Borel set $\{\f_2  \geq \f_1\}$ where 
$\f - \f_1 = \f_2 - \f_1$.
Therefore
$$
I (\f,\f_1) = \int_X (\f_2 - \f_1) (\MA (\f_1) - \MA (\f)).
$$
In the same way we get
$$
I (\f,\f_2) = \int_X (\f_1 - \f_2) (\MA (\f_2) - \MA (\f)).
$$
Adding the two identities yields
$$
I (\f,\f_1) + I (\f,\f_2) = I (\f_1,\f_2),
$$
hence $I (\f,\f_1) \leq I (\f_1,\f_2)$.
 \end{proof}

 \begin{rem}
 We let the reader verify that Proposition \ref{thm:general} is a particular case of Theorem \ref{thm:B}.
 The latter has the following interesting consequence: if we let $\p$ be any $\theta$-psh function such that
 $V_{\theta}-1 \leq \p\leq V_{\theta}$ , then Chebyshev inequality, together with Theorem \ref{thm:B}, shows that for all $\e>0$,
 $$
 \rm{Cap}(\{|\f_1-\f_2|>\e\}) \leq \frac{C_n}{\e} B^2 I(\f_1,\f_2)^{2^{-n}}.
 $$
 This yields a quantitative estimate on how "convergence in energy" implies "convergence in capacity".
 \end{rem}

\section{Strong stability} \label{sec:strong}
  
Let $\mu = f_{\mu} \omega^n$ be a non-negative Radon measure wich is absolutely continuous with respect
 to a fixed volume form $\omega^n$, with density in $L^p$ for some $p>1$. When
 $\mu(X)=\vol(\a)$, it has been shown in \cite{BEGZ} that the complex Monge-Amp\`ere equation 
 $$
 \langle (\theta + dd^c \f_{\mu})^n \rangle = \mu=f_{\mu} \omega^n,
 $$
 has a unique solution $\f_{\mu} \in \cP(X,\theta)$ with minimal singularities such that $\sup_X \f = 0$. This is
 a generalization to the case of big cohomology classes of a celebrated result of Kolodziej \cite{Kol98}
 (which itself generalized Yau's celebrated ${\mathcal C}^0$ a priori estimate \cite{Yau78}).
 
 In this section we prove Theorem C of the introduction, establishing a quantitative continuity property of the mapping $f_{\mu} \mapsto \f_{\mu}$. Since measures with $L^p$ densities, $p>1$, satisfy
conditions ${\mathcal H}(\b)$ for all $\b>0$, Theorem C is actually a consequence of the following
more general result:


\begin{thm} \label{thm:strong}
Fix $\b>0$ and assume $\mu,\nu$ are non-negative Radon measures which satisfy the condition
${\mathcal H}(\b)$ and are normalized so that
$$
\mu(X)=\nu(X)=\vol(\a).
$$
Let $\f_{\mu},\f_{\nu}$ be their normalized Monge-Amp\`ere potentials. Then
$$
||\f_{\mu}-\f_{\nu}||_{L^{\infty}(X)} \leq M_{\tau} ||\mu-\nu||^{\tau}
$$
where $\tau=\g/(2^n-\gamma)$ with $\g := \b/[n+\b(n+1)]$.
\end{thm}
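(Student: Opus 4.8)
## Proof proposal for Theorem \ref{thm:strong}

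The plan is to combine two ingredients: the energy-stability estimate of Theorem \ref{thm:B} (applied with $\p = V_\theta$), which bounds an $L^1$-type quantity in terms of $I(\f_\mu,\f_\nu)^{2^{-n}}$; and a \emph{uniform-from-$L^1$} stability estimate, coming from the Kolodziej-type a priori bound in \cite[Theorem 4.1]{BEGZ} refined via the condition $\mathcal H(\b)$, which upgrades an $L^1$ (or $\sup$) control on $\f_\mu - \f_\nu$ into an $L^\infty$ control. The interplay produces a fixed-point–type inequality on $\|\f_\mu - \f_\nu\|_{L^\infty(X)}$ whose resolution gives the stated Hölder exponent $\tau = \g/(2^n-\g)$.

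First I would record that the hypotheses guarantee uniform bounds: since $\mu,\nu$ satisfy $\mathcal H(\b)$, their potentials have minimal singularities and $\|\f_\mu - V_\theta\|_\infty,\|\f_\nu - V_\theta\|_\infty$ are bounded by a constant depending only on $C_\b$; in particular the energies $E(\f_\mu),E(\f_\nu)$ are uniformly bounded, so the constant $B$ in Theorem \ref{thm:B} is under control and may be absorbed. Next I would estimate $I(\f_\mu,\f_\nu) = \int(\f_\mu-\f_\nu)(d\nu - d\mu) \le \|\f_\mu-\f_\nu\|_{L^\infty} \|\mu-\nu\|$, so that Theorem \ref{thm:B} (with $\p=V_\theta$, whose $\MA(V_\theta)$ dominates a fixed multiple of $\omega^n$ by Theorem \ref{thm:c11}) yields
$$
\|\f_\mu - \f_\nu\|_{L^1(\omega^n)} \le C\, \bigl(\|\f_\mu-\f_\nu\|_{L^\infty}\,\|\mu-\nu\|\bigr)^{2^{-n}}.
$$
The second, and harder, ingredient is the quantitative $L^\infty$ estimate. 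Here I would rerun Kolodziej's argument in the big setting as in \cite{BEGZ}: set $u = \f_\mu$, $v=\f_\nu$, and study the sublevel sets $\{u < v - s\}$. Using the comparison principle on the ample locus together with the hypothesis $\nu(B) \le C_\b \ca(B)^{\b+1}$, one derives, by the standard iteration lemma (De Giorgi–type), a bound of the form
$$
\sup_X (v - u) \le A \cdot \|\mu-\nu\|^{\g/(\b+1)}\cdot\bigl(1 + \text{(capacity/volume terms)}\bigr),
$$
and symmetrically for $\sup_X(u-v)$; this is where the exponent $\g = \b/[n+\b(n+1)]$ enters, exactly as in \cite{EGZ,BEGZ}. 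The main obstacle is making this estimate genuinely quantitative and uniform on the ample locus — controlling the Monge-Ampère capacity of the sets $\{u - v < -s\}$ in terms of $\|\mu - \nu\|$ rather than just $\|\mu\|+\|\nu\|$, and handling the non-compactness of $\Amp(\a)$ via the regularity of $V_\theta$ from Theorem \ref{thm:c11}.

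Finally I would feed the $L^1$ bound into the $L^\infty$ bound: write $t := \|\f_\mu-\f_\nu\|_{L^\infty}$, and note the preliminary estimate gives, schematically, $t \le A'\,\|\mu-\nu\|^{\g/(\b+1)}\,t^{\lambda}$ for a suitable $\lambda \in (0,1)$ obtained by interpolating the $L^1$ control against the uniform bound and tracking powers through the Kolodziej iteration — the exponent of $t$ being $1 - 2^{-n}$ up to the bookkeeping that turns $\g/(\b+1)$ into $\g$. Solving this inequality for $t$ (i.e. $t^{1-\lambda} \le A' \|\mu-\nu\|^{\ldots}$) gives $t = \|\f_\mu - \f_\nu\|_{L^\infty} \le M_\tau\,\|\mu-\nu\|^{\tau}$ with $\tau = \g/(2^n - \g)$ after simplification, and $M_\tau$ depending only on $n$, $\b$ and $C_\b$ (hence, in the setting of Theorem C, only on upper bounds for the $L^p$ norms). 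The bulk of the work is the second paragraph's iteration; the rest is algebra.
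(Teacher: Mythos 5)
Your overall architecture --- a self-improving inequality in $t:=\Vert\f_\mu-\f_\nu\Vert_{L^\infty}$ obtained by combining $I(\f_\mu,\f_\nu)\le t\,\Vert\mu-\nu\Vert$, the energy estimate of Theorem \ref{thm:B}, and a Kolodziej-type ``$L^\infty$ from $L^1$'' bound with exponent $\g$, then solving $t\le C(t\Vert\mu-\nu\Vert)^{\g/2^n}$ --- is exactly the paper's proof, and the final algebra giving $\tau=\g/(2^n-\g)$ is correct. But there are two concrete problems in how you set up the middle steps. First, you apply Theorem \ref{thm:B} with $\p=V_\theta$ and assert that $\MA(V_\theta)$ dominates a fixed multiple of $\omega^n$; Theorem \ref{thm:c11} gives the \emph{opposite} inequality: $\MA(V_\theta)={\bf 1}_{\{V_\theta=0\}}\theta^n$ has bounded density with respect to $\omega^n$ but is supported on the contact set $\{V_\theta=0\}$, which for a genuinely big (non-K\"ahler) class is a proper subset of $X$, so no lower bound $\MA(V_\theta)\ge c\,\omega^n$ holds and your claimed $L^1(\omega^n)$ estimate does not follow. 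Second, even if you had $L^1(\omega^n)$ control, the Kolodziej/De Giorgi iteration you invoke in your second paragraph requires controlling $\int_{\{\f_\nu-\f_\mu>\e/2\}}d\mu$, i.e.\ the $L^1(\mu)$-norm (and symmetrically the $L^1(\nu)$-norm) of the difference --- and the measures in Theorem \ref{thm:strong} only satisfy ${\mathcal H}(\b)$; they need not be comparable to $\omega^n$ at all. The intermediate bound you write, $\sup_X(v-u)\le A\Vert\mu-\nu\Vert^{\g/(\b+1)}(\cdots)$, is not what the iteration produces (and if it were, it would already prove the theorem outright with a different exponent); the correct output is $\sup_X(\p-\f)_+\le A_0\Vert(\p-\f)_+\Vert_{L^1(\mu)}^{\g}$ where $\mu=\MA(\f)$, which is the paper's Proposition \ref{pro:EGZ}.

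The fix is to apply Theorem \ref{thm:B} twice, with $\p=\f_\mu$ and $\p=\f_\nu$, so that $\MA(\p)$ is exactly $\mu$ and $\nu$: this gives $\Vert\f_\mu-\f_\nu\Vert_{L^1(\mu)}+\Vert\f_\mu-\f_\nu\Vert_{L^1(\nu)}\le C\,I(\f_\mu,\f_\nu)^{2^{-n}}$ (the energies of $\f_\mu,\f_\nu$ being uniformly bounded since ${\mathcal H}(\b)$ forces minimal singularities, as you correctly note). Feeding these $L^1(\mu)$ and $L^1(\nu)$ norms into the sup-estimate of Proposition \ref{pro:EGZ} yields $t\le C'\bigl(t\Vert\mu-\nu\Vert\bigr)^{\g/2^n}$ directly, with no need for an $L^1(\omega^n)$ intermediary and no interpolation bookkeeping. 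With that substitution your argument closes.
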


 When $\a$ is  a K\"ahler class, Theorem C is due to Kolodziej \cite{Kol03} who obtained a better exponent $\tau$ (see \cite{DZ}
 for a sharp improvement of the exponent).
 
 \smallskip
  
We need the following refinement of a statement proved in \cite{EGZ} in the context of big and semi-positive cohomology classes:

\begin{prop} 
Let $\nu$ be a non negative Radon measure which satisfies the condition $\mathcal H (\infty)$. 
Let $\mu = f \nu$, where $0 \leq f \in L^p (X, \nu)$ with $p > 1$ and $\mu(X)=\vol(\a)$.
Fix $\f,\p \in \cP(X,\theta)$ such that $\sup_X \f=\sup_X \p$ and $\MA (\f) = \mu$. 
 Then for any $0 < \gamma < \frac{1}{n q + 1}$, 
$$
\sup_X (\psi - \f)_+ \leq M \Vert(\p - \f)_+ \Vert_{L^1 (X, \nu)}^{\gamma},
$$
where $M > 0$  only depends on $\gamma$ and a bound on the $L^p-$norm of $f$.
\end{prop}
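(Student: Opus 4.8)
\emph{Proof plan.} The idea is to run the Kolodziej capacity--iteration scheme, keeping every constant uniform by means of the a priori $L^\infty$ estimate of \cite[Theorem 4.1]{BEGZ}. Let $q$ be the conjugate exponent of $p$ and put $u:=(\p-\f)_+$, $K:=\Vert u\Vert_{L^1(X,\nu)}$. After subtracting a common constant from $\f$ and $\p$ we may assume $\sup_X\f=\sup_X\p=0$, so $\p\le V_\theta$; since $\MA(\f)=\mu$ has $L^p$ density, \cite[Theorem 4.1]{BEGZ} provides $A>0$, depending only on an upper bound for $\Vert f\Vert_{L^p}$, with $\Vert\f-V_\theta\Vert_{L^\infty(X)}\le A$. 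Then $\p-\f\le V_\theta-\f\le A$ everywhere, so $\sup_X u\le A$ and $K<\infty$. Replacing $\p$ by $\max(\p,\f)$ changes neither $u$ nor the plurifine open sets $U(s):=\{\p>\f+s\}$, $s\ge 0$, so we may also assume $\f\le\p\le V_\theta$; in particular $\p$ has minimal singularities. A non-empty plurifine open set is non-pluripolar, hence $\ca(U(s))=0$ forces $U(s)=\O$, i.e.\ $\sup_X u\le s$; so it is enough to prove $\ca(U(s))=0$ for some $s\le M K^\gamma$. Since $\sup_X u\le A$ unconditionally, we may also assume $K$ as small as needed, enlarging $M$.

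\emph{The capacity estimate.} The heart of the matter is the inequality
\[
\tau^n\,\ca\big(U(s+\tau)\big)\ \le\ (1+A)^n\,\mu\big(U(s)\big),\qquad s\ge 0,\ 0<\tau\le 1 .
\]
Given a competitor $\r\in\cP(X,\theta)$ with $V_\theta-1\le\r\le V_\theta$ and $0<t<1$, put $\sigma:=(1-t)\p+t\r$; this is a $\theta$-psh function with minimal singularities and $\MA(\sigma)\ge t^n\MA(\r)$. From $V_\theta-1\le\r\le V_\theta$ and $V_\theta-A\le\f\le\p\le V_\theta$ one reads off $\r-\p\in[-1,A]$, whence the inclusions $U(s+t)\subset\{\sigma>\f+s\}\subset U(s-tA)$. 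The comparison principle of \cite{BEGZ} applied to $\f+s$ and $\sigma$ gives $\int_{\{\sigma>\f+s\}}\MA(\sigma)\le\int_{\{\sigma>\f+s\}}\MA(\f)=\mu(\{\sigma>\f+s\})$; combining this with the two inclusions and with $\MA(\sigma)\ge t^n\MA(\r)$ yields $t^n\ca(U(s+t))\le\mu(U(s-tA))$, and the displayed inequality follows on replacing $s$ by $s+tA$ and setting $\tau:=t(1+A)$.

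\emph{Iteration and the exponent.} Hölder's inequality followed by the condition $\H(\b)$ for $\nu$, with $\b>q-1$ to be fixed at the end, gives $\mu(B)\le\Vert f\Vert_{L^p}\,\nu(B)^{1/q}\le\Vert f\Vert_{L^p}\,C_\b^{1/q}\,\ca(B)^{(1+\b)/q}$ for every Borel $B$. Hence, writing $g(s):=\ca(U(s))^{1/n}$ --- a non-increasing function vanishing for $s\ge A$ --- the capacity estimate becomes $\tau\,g(s+\tau)\le C_0\,g(s)^{1+\delta}$ with $\delta:=(1+\b)/q-1>0$ and $C_0$ depending only on $n,q,\b,C_\b,A,\Vert f\Vert_{L^p}$. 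A standard real-variable iteration (take $\tau_j=D\,2^{-j}$ with $D\asymp C_0\,g(s_0)^{\delta}$ and verify $g\big(s_0+\sum_{i<j}\tau_i\big)\le g(s_0)\,2^{-j/\delta}$ by induction; cf.\ \cite{EGZ,Kol03}) then shows that, once $g(s_0)$ is small enough, $\ca\big(U(s_0+S_\infty)\big)=0$ with $S_\infty\asymp C_0\,g(s_0)^{\delta}$. It remains to choose $s_0$: applying the capacity estimate with $s=\tau=s_0/2$ and Chebyshev's inequality $\nu(U(s_0/2))\le 2K/s_0$ gives $g(s_0)\le C_1\,K^{1/(nq)}\,s_0^{-1-1/(nq)}$, and balancing $s_0$ against $S_\infty$ leads to $s_0\asymp K^{\gamma^*}$ with $\gamma^*:=\delta/\big(\delta(nq+1)+nq\big)$, whence $\sup_X u\le s_0+S_\infty\le C\,K^{\gamma^*}$. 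Since $\gamma^*\nearrow 1/(nq+1)$ as $\b\to\infty$, for a given $\gamma<1/(nq+1)$ we fix $\b$ with $\gamma^*>\gamma$ and obtain $\sup_X u\le M\,K^\gamma$ first for small $K$, and then, using again $\sup_X u\le A$, for all $K$.

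\textbf{Expected main obstacle.} The two analytic ingredients --- the comparison-principle capacity bound and the De Giorgi-type iteration --- are by now routine in this circle of ideas, and uniformity of the constants is bought cheaply from $\Vert\f-V_\theta\Vert_{L^\infty(X)}\le A$. The genuinely delicate part is purely quantitative: getting the seed estimate $g(s_0)\lesssim K^{1/(nq)}s_0^{-1-1/(nq)}$ with the right powers, and optimising so that the resulting exponent $\gamma^*$ actually converges to the critical value $1/(nq+1)$ as $\b\to\infty$.
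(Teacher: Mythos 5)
Your proof is correct and follows essentially the same route as the paper: the paper deduces this statement from a slightly more technical proposition whose proof rests on exactly your ingredients --- the comparison-principle capacity estimate, the De Giorgi-type iteration of [EGZ, Prop.\ 2.6], a Chebyshev/H\"older step, and an optimization of the threshold, with uniform constants supplied by the $L^\infty$ a priori bound of [BEGZ, Thm.\ 4.1] --- and lands on the same exponent $\delta/(nq+\delta(nq+1))\to 1/(nq+1)$. The only difference is organizational: the paper invokes the iteration as the black-box inequality $\sup_X(\psi-\varphi)\le\varepsilon+B_0\,\mathrm{Cap}(\{\psi-\varphi>\varepsilon\})^{\beta/n}$ and then chooses $\varepsilon$ optimally, whereas you unpack the iteration and balance the seed level $s_0$ against the iteration length.
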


Here $u_+=\max(u,0)$ denotes as usual  the maximum of $u$ and $0$.

\smallskip

Let us stress that this relatively technical statement has interesting applications (see e.g \cite{DDGHKZ} where
it is used to establish H\"older-continuity properties of Monge-Amp\`ere potentials).
It is an immediate consequence of the following slightly more general (and more technical) result:
 
\begin{prop} \label{pro:EGZ}
Let $\f,\p$ be  $\theta$-plurisubharmonic functions such that 
$$
- M_0 + V_{\theta} \leq \sup \{\f , \psi\} \leq V_{\theta},
$$ 
for some $M_0 >0$. Assume that $\mu:=(\theta+dd^c \f)^n$  satisfies the condition 
$\mathcal H (\b)$ for some $\b>0$.
 Then  there exists $A_0 = A_0(\b,M_0)$ such that for any $r > 0$ we have
 $$
  \sup_X (\psi - \f)_+ \leq A_0 \Vert(\psi - \f)_+ \Vert_{L^r (\mu)}^{\gamma}
 \text{ with }
 \gamma = \frac{\b r }{n + \b (n + r)}.
 $$
Moreover if $\mu = f \nu,$ where $\nu$ a Borel measure and $f \in L^{p} (\nu)$, $p > 1$, then there exists $0< A_1 = A_1(\b,M_0,p)$ such that
 $$
  \sup_X (\psi - \f)_+ \leq A_1 \Vert f\Vert_{L^p (\nu)}^{\gamma q} \Vert(\psi - \f)_+ \Vert_{L^1 (\nu)}^{\gamma'},
 \text{ with }
 \g' = \frac{\b}{q n + \b (n q + 1)},
 $$
 where $1/p+1/q=1$ and $(\p-\f)_+:=\max(\p-\f,0)$.
\end{prop}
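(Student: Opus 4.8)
The plan is to adapt Kolodziej's capacity method, in the form of \cite{EGZ}, to this big-class stability statement. Set $\rho:=\sup\{\f,\p\}$; by hypothesis $V_{\theta}-M_0\le\rho\le V_{\theta}$, so $\rho$ is a $\theta$-psh function with \emph{minimal singularities}, in particular $\rho\in\mathcal E(X,\theta)$. Put $w:=\rho-\f\ge 0$; then $w=(\p-\f)_+$ pointwise, hence $\sup_X(\p-\f)_+=\sup_X w$ and $\Vert(\p-\f)_+\Vert_{L^r(\mu)}=\Vert w\Vert_{L^r(\mu)}=:J$. We may assume $J<\infty$, otherwise there is nothing to prove.

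\smallskip\noindent\emph{Step 1: a comparison inequality.} The key estimate is that
$$\ca(\{w>t+s\})\le(1+M_0)^n\,s^{-n}\,\mu(\{w>t\}),\qquad t\ge0,\ 0<s\le 1+M_0.$$
To prove it, fix a capacity test function $\lambda\in\cP(X,\theta)$ with $V_{\theta}-1\le\lambda\le V_{\theta}$, and for $\e\in(0,1]$ set $\sigma:=\e\lambda+(1-\e)\rho$. Then $\sigma$ has minimal singularities, $\MA(\sigma)\ge\e^n\MA(\lambda)$ (since $\theta+dd^c\sigma\ge\e(\theta+dd^c\lambda)$), and since $\rho-1\le\lambda\le\rho+M_0$ (from $\rho\le V_{\theta}$, $V_{\theta}-M_0\le\rho$, $V_{\theta}-1\le\lambda\le V_{\theta}$) we get $w-\e\le\sigma-\f\le w+\e M_0$ pointwise, whence $\{w>t+\e\}\subset\{\f<\sigma-t\}\subset\{w>t-\e M_0\}$. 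The comparison principle of \cite{BEGZ}, applied to the pair $\f,\sigma-t\in\mathcal E(X,\theta)$, now gives
$$\e^n\int_{\{w>t+\e\}}\MA(\lambda)\ \le\ \int_{\{\f<\sigma-t\}}\MA(\sigma)\ \le\ \int_{\{\f<\sigma-t\}}\MA(\f)\ \le\ \mu(\{w>t-\e M_0\}).$$
Taking the supremum over $\lambda$, then putting $s:=\e(1+M_0)$ and replacing $t$ by $t-\e M_0$, yields the claimed inequality for $0<s\le1+M_0$; the case $s>1+M_0$ follows by monotonicity of $\ca(\{w>\cdot\})$.

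\smallskip\noindent\emph{Step 2: iteration and optimisation.} Combining Step 1 with the hypothesis $\mathcal H(\b)$ (so $\mu(B)\le C_{\b}\ca(B)^{1+\b}$), and writing $g(t):=\ca(\{w>t\})$, one obtains $g(t+s)\le B\,s^{-n}g(t)^{1+\b}$ for $0<s\le1$, $t\ge0$, with $B:=(1+M_0)^nC_{\b}$. Combining Step 1 instead with Chebyshev's inequality $\mu(\{w>t\})\le t^{-r}J^r$ and taking $s=t/2$ gives, for $0<t_0\le2$, the bound $g(t_0)\le 2^{n+r}(1+M_0)^n\,t_0^{-(n+r)}\,J^r$. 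For $L\in(0,1]$, iterate along $\tau_j:=t_0+L(1-2^{-j})$: the first inequality gives $g(\tau_{j+1})\le(B2^nL^{-n})2^{nj}g(\tau_j)^{1+\b}$, and an elementary induction shows that if $g(t_0)\le(B2^nL^{-n})^{-1/\b}2^{-n/\b^2}$ then $g(\tau_j)\le g(t_0)2^{-nj/\b}\to0$, so $g(t_0+L)=0$. This smallness requirement is met by choosing $L:=c(n,\b)B^{1/n}g(t_0)^{\b/n}$. Since a set of zero capacity is pluripolar, hence Lebesgue-negligible, and since $\f,\p$ are $\theta$-psh (hence determined by their a.e.\ values), $g(t_0+L)=0$ forces $\sup_X w\le t_0+L$. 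Inserting the bound on $g(t_0)$, for every $t_0\in(0,2]$,
$$\sup_X w\ \le\ t_0+c'\,t_0^{-(n+r)\b/n}\,J^{r\b/n},$$
and minimising over $t_0$ (the optimum is $t_0\asymp J^{\g}$) gives $\sup_X w\le A_0\,J^{\g}$ with $\g=\b r/[n+\b(n+r)]$; the range of $J$ for which the optimal $t_0$ would exceed $2$ is covered by the crude a priori bound $\sup_X w\le 2+c(n,\b)B^{1/n}\ca(X)^{\b/n}$, everything absorbed into a single $A_0=A_0(\b,M_0)$. This proves the first assertion.

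\smallskip\noindent\emph{Step 3: the case $\mu=f\nu$, and the main obstacle.} Apply the first assertion with $r=1/q$ (legitimate since $q>1$); for this value of $r$ the exponent is $\g=\frac{\b/q}{n+\b(n+1/q)}=\frac{\b}{qn+\b(nq+1)}=\g'$. By Hölder with conjugate exponents $p,q$,
$$\Vert w\Vert_{L^{1/q}(\mu)}=\Big(\int_X w^{1/q}f\,d\nu\Big)^{q}\le\Big(\Vert f\Vert_{L^p(\nu)}\Vert w^{1/q}\Vert_{L^q(\nu)}\Big)^{q}=\Vert f\Vert_{L^p(\nu)}^{q}\,\Vert w\Vert_{L^1(\nu)},$$
and therefore $\sup_X w\le A_0\Vert w\Vert_{L^{1/q}(\mu)}^{\g'}\le A_0\Vert f\Vert_{L^p(\nu)}^{q\g'}\Vert w\Vert_{L^1(\nu)}^{\g'}$, which is the second assertion. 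The heart of the whole proof is Step 1: the barrier $\sigma=\e\lambda+(1-\e)\rho$ is designed to control $\MA(\lambda)$ from above by $\e^{-n}\MA(\sigma)$ while being trapped between a shift of $\f$ and $\rho$, and the hypothesis $V_{\theta}-M_0\le\sup\{\f,\p\}\le V_{\theta}$ enters exactly here, guaranteeing that $\rho$ — hence $\sigma$ — has minimal singularities, so that the comparison principle of \cite{BEGZ} is applicable despite the possibly severe singularities of $\f$ and $\p$ individually. The only substantive point in Step 2 is that the iteration length must be taken $\asymp g(t_0)^{\b/n}$ rather than a fixed constant; this is what produces the sub-linear Hölder exponent $\g$.
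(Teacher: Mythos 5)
Your proof is correct and follows essentially the same route as the paper's: the comparison-principle estimate $\ca(\{w>t+s\})\le (1+M_0)^n s^{-n}\mu(\{w>t\})$, the hypothesis $\mathcal H(\b)$ fed into a Kolodziej/De Giorgi-type iteration to get $\sup_X w\le \e + C\,\ca(\{w>\e\})^{\b/n}$, Chebyshev, optimisation over the cut level, and H\"older with $r=1/q$ for the second assertion. The only difference is that you prove in full the two ingredients the paper simply imports from \cite[Propositions 2.6 and 3.1]{EGZ}.
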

\smallskip
Although the proof is very close to that of Propositions 2.6 and 3.1 in \cite{EGZ}, we briefly
sketch it for the convenience of the reader.

\begin{proof}
Observe first that $ (\psi - \f)_+ = \sup \{\f,\psi\} - \f$ on $X$. So up to replacing $\psi$ by $\sup \{\f,\psi\}$, we can assume that $\psi \geq \f$ and $\psi$ satisfies the condition
$ -M_0 + V_{\theta} \leq \psi \leq V_{\theta}$ on $X$. 

Using the "big" comparison principle from \cite{BEGZ} and arguing exactly as in  Proposition 2.6 in \cite{EGZ}, we conclude that 
there is a constant $B_0 > 0$ such that for any $\e \in ]0,1]$
$$
\sup_X (\psi - \f) \leq \e + B_0 \left(Cap (\{\psi - \f > \e\}\right)^{\b \slash n}
$$
The proof of \cite[Proposition 2.6]{EGZ} (cf equation (3) p.616) shows that
$$
\e^n Cap (\{\psi - \f > \e\}) \leq (1 + M_0)^n \int_{ \{\psi - \f > \e\slash 2\} } d \mu.
$$
Chebyshev's inequality then yields
$$
 Cap (\{\psi - \f > \e\}) \leq 2^r \e^{- (n + r)} (1 + M_0)^n \int_X {{(\psi - \f)_+}^r} d \mu,
$$
for $r>0$ fixed.
Therefore 
$$
\sup_X (\psi - \f) \leq \e + B_0 2^{\b r \slash n}  (1 + M_0)^\a  \e^{- \b (n + r) \slash n}  \left(\int_X {{(\psi - \f)}_+}^r d \mu  \right)^{\b \slash n}
$$
Choosing $\e := (\Vert\psi - \f\Vert_{L^r (\mu)} \slash N)^{\g},$ where $N$ is an upper bound on $\psi - \f$ and $\g$ is
as in the satement of the proposition yields the desired inequality. 

 Now if $\mu = f \nu,$ where $f \in L^p (\nu)$ with $p>1$, H\"older's inequality yields
 $$
 \int_X {(\psi - \f)^r}_+ d \mu \leq \Vert f \Vert_{L^p (\nu)} \left(\int_X {(\psi - \f)}^{r q} d \nu \right)^{1 \slash q}.
 $$
The conclusion follows by taking $r := 1 \slash q$.
\end{proof}

\medskip

\noindent {\bf Proof of Theorem \ref{thm:strong}.}  
 Since $\f=\f_{\mu}$ and $\psi=\f_{\nu}$ have minimal singularities, $\f - \p$ is bounded hence
  \begin{eqnarray*}
  I(\f,\p) & = & \int_X(\f-\p)(\MA(\p)-\MA(\f)) = \int_X (\f - \p) d (\nu - \mu) \\
  & \leq & \Vert \f - \p\Vert_{L^{\infty}(X)} \Vert \mu - \nu\Vert
  \end{eqnarray*}
It follows from Proposition \ref{pro:EGZ} that
$$
\Vert \f - \p\Vert_{L^{\infty} (X)} \leq C_{\b} \left[ \Vert \f - \p\Vert_{L^1 (X,\mu)}^{\gamma}
+\Vert \f - \p\Vert_{L^1 (X,\nu)}^{\gamma} \right].
$$
with $\g:= \b/[n+\b(n+1)]$.
Now Theorem \ref{thm:B} implies
$$
\Vert \f - \p\Vert_{L^{\infty} (X)} \leq C_{\b}' \left(\Vert \f - \p\Vert_{L^{\infty}} \Vert \mu - \nu\Vert\right)^{\gamma \slash 2^n},
$$
thus
  $$
  \Vert \f - \p\Vert_{L^{\infty} (X)} \leq C_{\b}'' \Vert \mu - \nu\Vert^{\tau}
  $$
  where $\tau := \frac{\gamma}{2^n - \gamma}$.
 \hfill $\Box$

  \end{document}